\newcommand\Inv{{\mathrm{In}}}
\newcommand\D{{\mathcal{D}}}
\newcommand\BL{{\mathrm{BL}}}
\newcommand\IBL{{\mathrm{IBL}}}
\newcommand\CBL{{\mathrm{CBL}}}
\newcommand\cp{{\mathrm{cp}}}
\newcommand\Des{{\mathrm{Des}}}
\newcommand{\beeq}{\begin{equation}}
\newcommand{\eneq}{\end{equation}}
\newcommand{\bece}{\begin{center}}
\newcommand{\ence}{\end{center}}
\newtheorem{thm}{Theorem}[section]
\newtheorem{pro}[thm]{Proposition}
\newtheorem{cor}[thm]{Corollary}
\newtheorem{lem}[thm]{Lemma}
\newtheorem{defn}[thm]{Definition}
\theoremstyle{definition}
\newtheorem{ex}[thm]{Example}
\newenvironment{rem}[1][Remark.]{\begin{trivlist}
\item[\hskip \labelsep {\bfseries #1}]}{\end{trivlist}}
\newcommand{\cellsize}{10}
\newlength{\cellsz} \setlength{\cellsz}{\cellsize\unitlength}
\newsavebox{\cell}
\sbox{\cell}{\begin{picture}(\cellsize,\cellsize)
\put(0,0){\line(1,0){\cellsize}}
\put(0,0){\line(0,1){\cellsize}}
\put(\cellsize,0){\line(0,1){\cellsize}}
\put(0,\cellsize){\line(1,0){\cellsize}}
\end{picture}}
\newcommand\cellify[1]{\def\thearg{#1}\def\nothing{}%
\ifx\thearg\nothing
\vrule width0pt height\cellsz depth0pt\else
\hbox to 0pt{\usebox{\cell} \hss}\fi%
\vbox to \cellsz{
\vss
\hbox to \cellsz{\hss$#1$\hss}
\vss}}
\newcommand\tableau[1]{\vtop{\let\\\cr
\baselineskip -16000pt \lineskiplimit 16000pt \lineskip 0pt
\ialign{&\cellify{##}\cr#1\crcr}}}
\begin{document}
\begin{frontmatter}
\title{Tower tableaux and Schubert polynomials}
\author{Olcay Co\c{s}kun}
\ead{olcay.coskun@boun.edu.tr}

\author{M\"uge Ta\c{s}k\i n}
\ead{muge.taskin@boun.edu.tr}

\address{Boğaziçi Üniversitesi, Matematik B\"ol\"um\"u 34342 Bebek,  Istanbul, Turkey.}

\fntext[fn1]{Both of the authors are supported by Boğaziçi-BAP-6029 and by Tübitak/3501/109T661.}

\begin{abstract}
We prove that the well-known condition of being a balanced labeling can be characterized in terms of the
sliding algorithm on tower diagrams. The characterization involves a generalization of authors' Rothification
algorithm. Using the characterization, we obtain descriptions of Schubert polynomials and Stanley symmetric
functions.
\end{abstract}
\begin{keyword}
Schubert polynomial\sep tower tableaux\sep balanced labeling
\end{keyword}
\end{frontmatter}
\section{Introduction}\label{Section:introduction}
Tower diagrams are introduced by the authors as a new way to study reduced words of permutations.
The same diagrams are also studied by several authors in different contents, see \cite{BH}, \cite{H}, \cite{RY}.
In this paper, our aim is to study the relation between the tower tableaux and Schubert polynomials.
A relation between these objects can be predicted since the well-known Rothe diagrams are closely related to
tower diagrams, as shown in \cite[Section 6 - 7]{CT} as well as to Schubert polynomials as shown
in \cite{FGRS}.

Given a permutation $\omega\in S_n$, recall that, the Rothe diagram of $\omega$ is equivalent to the diagram of 
inversions
and is constructed by removing certain hooks from a square diagram of size $n$. On the other hand,
the tower diagram of $\omega$ is obtained by sliding a reduced word of $\omega$ to the empty diagram, with the
rules recalled in Section \ref{Section:prelim}. By Theorem 2.4 in \cite{FGRS}, there is a bijection between all
reduced words of the permutation $\omega$ and all \textit{injective balanced} labelings of its Rothe diagram. On
the other hand, by Theorem 4.4 in \cite{CT}, there is a bijection between the set of all reduced words of $\omega$
and all \textit{standard} labelings of its tower diagram. See Section \ref{Section:prelim} for further descriptions.

This similarity between the two construction comes from the above mentioned characterization of tower and
Rothe diagrams, each in terms of the other. Precisely,
the tower diagram of $\omega$ can be determined by pushing the cells in the
Rothe diagram of $\omega$ to the top border of the diagram and then reflecting them on this border. Obtaining
the Rothe diagram from the tower diagram is more complicated and is given by the Rothification algorithm, also
recalled below. This algorithm makes use of a special labeling of tower diagrams.

In this paper, we push this relation between tower and Rothe diagrams forward to obtain a new description of
Schubert polynomials in terms of tower tableaux. In order to achieve this, we use the description of
Fomin-Greene-Reiner-Shimonozo which employs balanced labeling of Rothe diagrams.
Our main observation is that the Rothification algorithm can be extended to all standard labelings of tower
diagrams. This extension allows us to establish a bijective correspondence between standard tower tableaux
and injective balanced labelings of Rothe diagrams, and hence we transform the condition of being injective and
balanced to the condition of being standard.

We can rephrase this result as follows. Let STT$(\omega)$ denote the set of all standard tower tableaux of
shape $\mathcal T(\omega)$ and let IBL$(\omega)$ denote the set of all injective balanced labeling of the Rothe
diagram of $\omega$. Also let Red$(\omega)$ denote the set of all reduced words of $\omega$. Then the
following diagram commutes.
\begin{center}
\begin{picture}(160,70)
\put(-10,55){STT$(\omega)$}
\put(160,55){IBL$(\omega)$}
\put(85,5){Red$(\omega)$}
\put(10,20){Sliding}
\put(150,20){Canonical tableau}
\put(70,58){Rothification}

\multiput(30,56)(5,0){25}{\line(1,0){5}}
\multiput(150,56)(5,0){1}{\vector(1,0){3}}
\multiput(80,10)(5,0){1}{\vector(-3,2){55}}

\multiput(120,10)(5,0){1}{\vector(4,3){50}}
\end{picture}
\end{center}
This bijection suggests the existence of a more general correspondence between balanced labelings of Rothe
diagrams and certain labelings of tower diagrams. We show, in Section \ref{Section:BalancedLabel}, that the
Rothification algorithm  can be extended to \textit{semi-standard} tower tableaux in such a way that the above
bijection between standard tower tableaux and injective balanced labelings extends to a bijection between
semi-standard tableaux and balanced labelings. Now, being balanced is transformed to being semi-standard.

The reason behind the above correspondence can be seen by determining the hooks in tower diagrams that
correspond to the hooks in Rothe diagrams under Rothification. We include this at the end of Section
\ref{Section:BalancedLabel}.

Returning back to Schubert polynomials, Fomin-Greene-Reiner-Shimonozo prove that the monomials in the
Schubert polynomial of a permutation $\omega$ are determined by flagged column strict balanced labelings of
its Rothe diagram. Using the above correspondence, we prove, in Section \ref{Section:Schubert}, that the
monomials can also be the described by the flagged semi-standard tower tableaux. As in the case of balanced
labelings, our result also describes Stanley symmetric functions, as indicated in Section \ref{Section:Schubert}.

\textbf{Acknowledgement.} We thank the referees for careful reading and helpful remarks.

\section{Preliminaries}\label{Section:prelim}
\subsection{Digression on tower diagrams}
In this section, we recall the necessary background from \cite{CT} without details.
To begin with, by a \textbf{\textit {tower}} $\mathcal{T}$ of size $k\ge 0$ we mean a vertical strip
of $k$ squares of side length $1$. Then a \textbf{\textit{tower diagram}} $\mathcal T$ is a finite sequence 
$(\mathcal T_1,\mathcal T_2,\ldots,\mathcal T_l)$ of towers. We always consider the tower diagram $\mathcal T$ 
as located on the first
 quadrant of the plane so that for each $i$, the tower $\mathcal{T}_i$ is located on the interval $[i-1,i]$ of the
 horizontal axis and has size equal to the size of $\mathcal T_i$. The following
 is an example of a tower diagram.
\begin{center}
\begin{picture}(100,60)
\put(-84,20){$(0,1,4,2,1,0,3)= $}
\multiput(0,0)(0,0){1}{\line(1,0){100}}
\multiput(0,0)(0,0){1}{\line(0,1){60}}
\multiput(0,10)(2,0){50}{\line(0,1){.1}}
\multiput(0,20)(2,0){50}{\line(0,1){.1}}
\multiput(0,30)(2,0){50}{\line(0,1){.1}}
\multiput(0,40)(2,0){50}{\line(0,1){.1}}
\multiput(0,50)(2,0){50}{\line(0,1){.1}}
\multiput(0,60)(2,0){50}{\line(0,1){.1}}
\multiput(10,0)(0,2){30}{\line(1,0){.1}}
\multiput(20,0)(0,2){30}{\line(1,0){.1}}
\multiput(30,0)(0,2){30}{\line(1,0){.1}}
\multiput(40,0)(0,2){30}{\line(1,0){.1}}
\multiput(50,0)(0,2){30}{\line(1,0){.1}}
\multiput(60,0)(0,2){30}{\line(1,0){.1}}
\multiput(70,0)(0,2){30}{\line(1,0){.1}}
\multiput(80,0)(0,2){30}{\line(1,0){.1}}
\multiput(90,0)(0,2){30}{\line(1,0){.1}}
\multiput(80,0)(0,2){30}{\line(1,0){.1}}
\multiput(90,0)(0,2){30}{\line(1,0){.1}} \put(10,0){\tableau{{}}}
\put(20,30){\tableau{{}\\{}\\{}\\{}}} \put(30,10){\tableau{{}\\{}}}
\put(40,0){\tableau{{}}} \put(60,20){\tableau{{}\\{}\\{}}}
\end{picture}
\end{center}

To any tower diagram $\mathcal{T}$, one can associate a set, still
denoted by $\mathcal{T}$, consisting of the pairs of non-negative
integers with the rule  that each pair $(i,j)$ corresponds to the
cell in $\mathcal{T}$ whose south-east corner is located at the
point $(i,j)$ of the first quadrant. Such a set can also be
characterized by the  rule that if $(i,j)\in \mathcal{T}$ then
$\{(i,0), (i,1),\ldots, (i,j) \}\subset \mathcal{T}$. For the rest,
we identify any square with its south-east corner.

There are two basic operations on tower diagrams. One of them, the
flight algorithm, is a way to decrease the number of cells in a
tower diagram. With this algorithm, a cell can be removed if it is a
corner cell. In practice, we choose a cell, say  $c$, from a tower
and move it in the north-west direction starting from  the line
passing through its main diagonal subject to the following
conditions. 
\begin{enumerate}
\item[(i)] If the line does not intersect any other cell, we say that the cell $c$ has a flight path consisting of just 
itself. 
\item[(ii)] Otherwise, if the first intersection with the tower diagram is the north-east corner of the top cell of a
tower, then we say that the cell $c$ has no flight path. 
\item[(iii)] Finally, if the intersection is through the main diagonal of another cell, say $d$, in the tower diagram, 
then we 
say that the cell $c$ has a flight path if and only if  the cell $e$ just below $d$ has. In that case the flight path of 
$c$ is the union of the flight path of $e$ and the cells $c$ and $d$. 
\end{enumerate}
A top cell of a tower with a flight path
is called a \textbf{corner} of the tower diagram. To each corner cell, we associate a \textbf{flight number} $f$ if the 
main diagonal of the left most cell in the flight path lies on the diagonal $x+y=f$. We denote by $c\nwarrow 
\mathcal T$ the tower diagram obtained by removing the corner cell $c$.

\begin{ex} In the following tower diagram, we illustrate the cells which have no flight path. Thus all other cells 
have a flight path.
\begin{center}
\begin{picture}(80,60)
\multiput(0,0)(0,0){1}{\line(1,0){80}}
\multiput(0,0)(0,0){1}{\line(0,1){60}}
\multiput(0,10)(2,0){40}{\line(0,1){.1}}
\multiput(0,20)(2,0){40}{\line(0,1){.1}}
\multiput(0,30)(2,0){40}{\line(0,1){.1}}
\multiput(0,40)(2,0){40}{\line(0,1){.1}}
\multiput(0,50)(2,0){40}{\line(0,1){.1}}
\multiput(0,60)(2,0){40}{\line(0,1){.1}}
\multiput(10,0)(0,2){30}{\line(1,0){.1}}
\multiput(20,0)(0,2){30}{\line(1,0){.1}}
\multiput(30,0)(0,2){30}{\line(1,0){.1}}
\multiput(40,0)(0,2){30}{\line(1,0){.1}}
\multiput(50,0)(0,2){30}{\line(1,0){.1}}
\multiput(60,0)(0,2){30}{\line(1,0){.1}}
\multiput(70,0)(0,2){30}{\line(1,0){.1}}
\multiput(80,0)(0,2){30}{\line(1,0){.1}}
 \put(10,0){\tableau{{}}}
\put(20,30){\tableau{{}\\{}\\{}\\{}}} \put(30,10){\tableau{{}\\{}}}
\put(40,0){\tableau{{}}} \put(60,30){\tableau{{}\\{}\\{}\\{}}}
\multiput(25,5)(0,0){1}{\vector(-1,1){10}}
\end{picture}\hskip.1in
\begin{picture}(80,60)
\multiput(0,0)(0,0){1}{\line(1,0){80}}
\multiput(0,0)(0,0){1}{\line(0,1){60}}
\multiput(0,10)(2,0){40}{\line(0,1){.1}}
\multiput(0,20)(2,0){40}{\line(0,1){.1}}
\multiput(0,30)(2,0){40}{\line(0,1){.1}}
\multiput(0,40)(2,0){40}{\line(0,1){.1}}
\multiput(0,50)(2,0){40}{\line(0,1){.1}}
\multiput(0,60)(2,0){40}{\line(0,1){.1}}
\multiput(10,0)(0,2){30}{\line(1,0){.1}}
\multiput(20,0)(0,2){30}{\line(1,0){.1}}
\multiput(30,0)(0,2){30}{\line(1,0){.1}}
\multiput(40,0)(0,2){30}{\line(1,0){.1}}
\multiput(50,0)(0,2){30}{\line(1,0){.1}}
\multiput(60,0)(0,2){30}{\line(1,0){.1}}
\multiput(70,0)(0,2){30}{\line(1,0){.1}}
\multiput(80,0)(0,2){30}{\line(1,0){.1}}
 \put(10,0){\tableau{{}}}
\put(20,30){\tableau{{}\\{}\\{}\\{}}} \put(30,10){\tableau{{}\\{}}}
\put(40,0){\tableau{{}}} \put(60,30){\tableau{{}\\{}\\{}\\{}}}
\multiput(35,5)(0,0){1}{\line(-1,1){10}}
\multiput(25,15)(0,0){1}{\line(0,-1){10}}
\multiput(25,5)(0,0){1}{\vector(-1,1){10}}
\end{picture}\hskip.1in
\begin{picture}(80,60)
\multiput(0,0)(0,0){1}{\line(1,0){80}}
\multiput(0,0)(0,0){1}{\line(0,1){60}}
\multiput(0,10)(2,0){40}{\line(0,1){.1}}
\multiput(0,20)(2,0){40}{\line(0,1){.1}}
\multiput(0,30)(2,0){40}{\line(0,1){.1}}
\multiput(0,40)(2,0){40}{\line(0,1){.1}}
\multiput(0,50)(2,0){40}{\line(0,1){.1}}
\multiput(0,60)(2,0){40}{\line(0,1){.1}}
\multiput(10,0)(0,2){30}{\line(1,0){.1}}
\multiput(20,0)(0,2){30}{\line(1,0){.1}}
\multiput(30,0)(0,2){30}{\line(1,0){.1}}
\multiput(40,0)(0,2){30}{\line(1,0){.1}}
\multiput(50,0)(0,2){30}{\line(1,0){.1}}
\multiput(60,0)(0,2){30}{\line(1,0){.1}}
\multiput(70,0)(0,2){30}{\line(1,0){.1}}
\multiput(80,0)(0,2){30}{\line(1,0){.1}}
\put(10,0){\tableau{{}}}
\put(20,30){\tableau{{}\\{}\\{}\\{}}} \put(30,10){\tableau{{}\\{}}}
\put(40,0){\tableau{{}}} \put(60,30){\tableau{{}\\{}\\{}\\{}}}
\multiput(45,5)(0,0){1}{\line(-1,1){10}}
\multiput(35,15)(0,0){1}{\line(0,-1){10}}
\multiput(35,5)(0,0){1}{\line(-1,1){10}}
\multiput(25,15)(0,0){1}{\line(0,-1){10}}
\multiput(25,5)(0,0){1}{\vector(-1,1){10}}
\end{picture}\hskip.1in
\begin{picture}(80,60)
\multiput(0,0)(0,0){1}{\line(1,0){80}}
\multiput(0,0)(0,0){1}{\line(0,1){60}}
\multiput(0,10)(2,0){40}{\line(0,1){.1}}
\multiput(0,20)(2,0){40}{\line(0,1){.1}}
\multiput(0,30)(2,0){40}{\line(0,1){.1}}
\multiput(0,40)(2,0){40}{\line(0,1){.1}}
\multiput(0,50)(2,0){40}{\line(0,1){.1}}
\multiput(0,60)(2,0){40}{\line(0,1){.1}}
\multiput(10,0)(0,2){30}{\line(1,0){.1}}
\multiput(20,0)(0,2){30}{\line(1,0){.1}}
\multiput(30,0)(0,2){30}{\line(1,0){.1}}
\multiput(40,0)(0,2){30}{\line(1,0){.1}}
\multiput(50,0)(0,2){30}{\line(1,0){.1}}
\multiput(60,0)(0,2){30}{\line(1,0){.1}}
\multiput(70,0)(0,2){30}{\line(1,0){.1}}
\multiput(80,0)(0,2){30}{\line(1,0){.1}}
 \put(10,0){\tableau{{}}}
\put(20,30){\tableau{{}\\{}\\{}\\{}}} \put(30,10){\tableau{{}\\{}}}
\put(40,0){\tableau{{}}} \put(60,30){\tableau{{}\\{}\\{}\\{}}}
\multiput(65,5)(0,0){1}{\vector(-1,1){40}}
\end{picture}
\end{center}
\end{ex}

On the other hand, the other basic algorithm, called the sliding, is a way to increase the
number of cells in a tower diagram. In this case, under certain
conditions, we can slide new cells into the diagram along reverse
diagonal lines. This operation can be thought as a sliding of
numbers into the diagram. Practically, when we slide the number $i$,
we place a new cell $c$ so that it lies on $x+y=i$ and its
east border is the interval $[i,i+1]$ on the $y$-axis.  Then we let it
slide on the line passing through its main diagonal. This can be
thought as the sliding of $i$ into the diagram. Now there are four
cases.

\begin{enumerate}
\item[(i)] If the line $x+y=i$ does not intersect any cell of the tower
diagram, we say that the cell $c$ has a slide into the tower diagram
through  $x+y=i$. The sliding stops when the cell intersects with
the $x$-axis.
\item[(ii)] Otherwise, while sliding, if the first intersection of the line
$x+y=i$ appears on the northeast corner of some cell, say $d$, then
the top of $d$ is empty and $c$ is placed on the top of $d$.
\item[(iii)] For the other case, if the first intersection of $x+y=i$ is on the north-west
corner of a cell, say $d$, and if the top of $d$ is empty, then we
say that the cell $c$ has no slide into the tower diagram, and that
the sliding algorithm terminates for the cell $c$.
\item[(iv)] In the remaining case, if the first intersection is with the
north-west corner of a cell, say $d$, and if the top of $d$ is
non-empty,  then  we move the cell $c$ one level up and let it
continue its sliding through the new diagonal line $x+y=i+1$, starting from the top of $d$,
subject to the same conditions.
\end{enumerate}

 We include the technical definition
of sliding in the appendix, where it is used to prove a technical
lemma.

\begin{ex} In the following example, we illustrate the four cases of the sliding
algorithm. Observe that the sliding algorithm terminates only on the
third case.
\begin{center}
\begin{picture}(80,60)
\multiput(0,0)(0,0){1}{\line(1,0){60}}
\multiput(0,0)(0,0){1}{\line(0,1){60}}
\multiput(0,10)(2,0){30}{\line(0,1){.1}}
\multiput(0,20)(2,0){30}{\line(0,1){.1}}
\multiput(0,30)(2,0){30}{\line(0,1){.1}}
\multiput(0,40)(2,0){30}{\line(0,1){.1}}
\multiput(0,50)(2,0){30}{\line(0,1){.1}}
\multiput(0,60)(2,0){30}{\line(0,1){.1}}
\multiput(10,0)(0,2){30}{\line(1,0){.1}}
\multiput(20,0)(0,2){30}{\line(1,0){.1}}
\multiput(30,0)(0,2){30}{\line(1,0){.1}}
\multiput(40,0)(0,2){30}{\line(1,0){.1}}
\multiput(50,0)(0,2){30}{\line(1,0){.1}}
\multiput(60,0)(0,2){30}{\line(1,0){.1}}
 \put(0,0){\tableau{{}}}
\put(10,10){\tableau{{}\\{}}}
\put(20,0){\tableau{{}}}
 \put(-10,50){\tableau{{}}}
\multiput(0,50)(0,0){1}{\vector(1,-1){50}}
\end{picture}\hskip 0.05in
\begin{picture}(80,60)
\multiput(0,0)(0,0){1}{\line(1,0){60}}
\multiput(0,0)(0,0){1}{\line(0,1){60}}
\multiput(0,10)(2,0){30}{\line(0,1){.1}}
\multiput(0,20)(2,0){30}{\line(0,1){.1}}
\multiput(0,30)(2,0){30}{\line(0,1){.1}}
\multiput(0,40)(2,0){30}{\line(0,1){.1}}
\multiput(0,50)(2,0){30}{\line(0,1){.1}}
\multiput(0,60)(2,0){30}{\line(0,1){.1}}
\multiput(10,0)(0,2){30}{\line(1,0){.1}}
\multiput(20,0)(0,2){30}{\line(1,0){.1}}
\multiput(30,0)(0,2){30}{\line(1,0){.1}}
\multiput(40,0)(0,2){30}{\line(1,0){.1}}
\multiput(50,0)(0,2){30}{\line(1,0){.1}}
\multiput(60,0)(0,2){30}{\line(1,0){.1}}
 \put(10,0){\tableau{{}}}
\put(20,30){\tableau{{}\\{}\\{}\\{}}}
\put(30,10){\tableau{{}\\{}}}
\put(40,0){\tableau{{}}}
\multiput(0,30)(0,0){1}{\vector(1,-1){20}}
 \put(-10,30){\tableau{{}}}
\end{picture}\hskip 0.05in
\begin{picture}(80,60)
\multiput(0,0)(0,0){1}{\line(1,0){60}}
\multiput(0,0)(0,0){1}{\line(0,1){60}}
\multiput(0,10)(2,0){30}{\line(0,1){.1}}
\multiput(0,20)(2,0){30}{\line(0,1){.1}}
\multiput(0,30)(2,0){30}{\line(0,1){.1}}
\multiput(0,40)(2,0){30}{\line(0,1){.1}}
\multiput(0,50)(2,0){30}{\line(0,1){.1}}
\multiput(0,60)(2,0){30}{\line(0,1){.1}}
\multiput(10,0)(0,2){30}{\line(1,0){.1}}
\multiput(20,0)(0,2){30}{\line(1,0){.1}}
\multiput(30,0)(0,2){30}{\line(1,0){.1}}
\multiput(40,0)(0,2){30}{\line(1,0){.1}}
\multiput(50,0)(0,2){30}{\line(1,0){.1}}
\multiput(60,0)(0,2){30}{\line(1,0){.1}}
\put(10,0){\tableau{{}}}
\put(20,30){\tableau{{}\\{}\\{}\\{}}}
\put(30,10){\tableau{{}\\{}}}
\put(40,0){\tableau{{}}}
\put(-10,20){\tableau{{}}}
\multiput(0,20)(0,0){1}{\vector(1,-1){10}}
\end{picture}\hskip 0.05in
\begin{picture}(70,60)
\multiput(0,0)(0,0){1}{\line(1,0){70}}
\multiput(0,0)(0,0){1}{\line(0,1){60}}
\multiput(0,10)(2,0){35}{\line(0,1){.1}}
\multiput(0,20)(2,0){35}{\line(0,1){.1}}
\multiput(0,30)(2,0){35}{\line(0,1){.1}}
\multiput(0,40)(2,0){35}{\line(0,1){.1}}
\multiput(0,50)(2,0){35}{\line(0,1){.1}}
\multiput(0,60)(2,0){35}{\line(0,1){.1}}
\multiput(10,0)(0,2){30}{\line(1,0){.1}}
\multiput(20,0)(0,2){30}{\line(1,0){.1}}
\multiput(30,0)(0,2){30}{\line(1,0){.1}}
\multiput(40,0)(0,2){30}{\line(1,0){.1}}
\multiput(50,0)(0,2){30}{\line(1,0){.1}}
\multiput(60,0)(0,2){30}{\line(1,0){.1}}
\multiput(70,0)(0,2){30}{\line(1,0){.1}}
 \put(10,0){\tableau{{}}}
\put(20,30){\tableau{{}\\{}\\{}\\{}}} \put(30,10){\tableau{{}\\{}}}
\put(40,0){\tableau{{}}} \multiput(0,50)(0,0){1}{\vector(1,-1){25}}
\multiput(25,35)(0,0){1}{\line(0,-1){10}}
\multiput(25,35)(0,0){1}{\vector(1,-1){10}}
\put(-10,50){\tableau{{}}}
\end{picture}
\end{center}
\end{ex}

It is easy to prove that the flight and the sliding algorithms  are
inverse to each other. We refer to \cite{CT} for detailed
definitions and examples. With the sliding algorithm, we can slide
words into the empty diagram to obtain tower diagrams, whenever the
sliding algorithm does not terminate for each letter of the word. In
this case, in order to keep track of the order of appearance of the
cells, we put numbers $1, 2,\ldots, l$ inside the cells where $l$ is
the number of cells in the tower diagram.  For example, by sliding
the word $\alpha = 54534562$ into the empty diagram, we obtain the
tower diagram $(0,1,4,2,1)$ together with the corresponding
numbering given below.

\begin{center}
\begin{picture}(100,60)
\multiput(0,0)(0,0){1}{\line(1,0){100}}
\multiput(0,0)(0,0){1}{\line(0,1){60}}
\multiput(0,10)(2,0){50}{\line(0,1){.1}}
\multiput(0,20)(2,0){50}{\line(0,1){.1}}
\multiput(0,30)(2,0){50}{\line(0,1){.1}}
\multiput(0,40)(2,0){50}{\line(0,1){.1}}
\multiput(0,50)(2,0){50}{\line(0,1){.1}}
\multiput(0,60)(2,0){50}{\line(0,1){.1}}
\multiput(10,0)(0,2){30}{\line(1,0){.1}}
\multiput(20,0)(0,2){30}{\line(1,0){.1}}
\multiput(30,0)(0,2){30}{\line(1,0){.1}}
\multiput(40,0)(0,2){30}{\line(1,0){.1}}
\multiput(50,0)(0,2){30}{\line(1,0){.1}}
\multiput(60,0)(0,2){30}{\line(1,0){.1}}
\multiput(70,0)(0,2){30}{\line(1,0){.1}}
\multiput(80,0)(0,2){30}{\line(1,0){.1}}
\multiput(90,0)(0,2){30}{\line(1,0){.1}}
\multiput(80,0)(0,2){30}{\line(1,0){.1}}
\multiput(90,0)(0,2){30}{\line(1,0){.1}} \put(10,0){\tableau{{8}}}
\put(20,30){\tableau{{7}\\{6}\\{5}\\{4}}} \put(30,10){\tableau{{3}\\{2}}}
\put(40,0){\tableau{{1}}}
\end{picture}
\end{center}

Now we call a tower tableau \textbf{\textit{standard}} if it is
obtained by the  sliding of a word. It is possible to characterize
standard tower tableaux by referring to the flight algorithm instead
of sliding. More precisely, we call a tower tableau $T$ of $n$ cells
standard, if for any $k\in \{ 0,1,\ldots, n-1 \}$, the cell numbered
with $n-k$ is a corner cell in the partial tableau $T_{\le n-k}$
obtained by forgetting all the cells with greater label.

It follows from the definition that labeling of the tower diagram $\mathcal T$ obtained by labeling the right most
bottom cell with $1$ and continuing from bottom to top and right to left is standard. This special labeling is called
the natural tower tableau of shape $\mathcal T$ and is denoted by $\mathbb T$.

In \cite[Theorem 4.3]{CT}, we show that given a word $\alpha$, the sliding algorithm produces a standard tower
tableau if and only if the word is a reduced word for some permutation. This establishes a bijective
correspondence between the set of all reduced words and the set of all standard tower tableaux. On the other
hand, by \cite[Theorem 4.4]{CT}, any permutation determines a unique shape, in other words, two different
reduced words for a given permutation determine two different labelings of the same tower diagram. As a result,
we obtain a bijective correspondence between
\begin{enumerate}
\item[(a)] the set of all finite permutations and
\item[(b)] the set of all (finite) tower diagrams.
\end{enumerate}
Combining these two bijections, for a given permutation $\omega$ with the associated tower diagram $\mathcal T
$, we obtain a bijective correspondence between
\begin{enumerate}
\item[(a)] the set of all reduced words representing $\omega$ and
\item[(b)] the set of all standard tower tableaux of shape $\mathcal T$.
\end{enumerate}
Here, given a reduced word $\alpha$, the standard tower tableau is determined by the sliding algorithm,
whereas, given a standard tower tableau, the corresponding reduced word is determined by the flight algorithm.
This goes as follows. Let $T$ be a standard tower tableau of size $l$ and for any $1\le i\le l$, let $\alpha_i$
be the flight number of the cell with label $i$ in the tableau $T_{\le i}$. Then the word $\alpha=
\alpha_1\alpha_2\ldots\alpha_l$ is reduced whose sliding gives the tableau $T$. We call $\alpha$ the
\textbf{reading word} of $T$.

\subsection{Semi-standard tower tableaux}
In order to use tower diagrams in the context of Schubert polynomials, we need to introduce semi-standard
labelings. These will also generalize the earlier definition of being standard. The definition goes as follows.

\begin{defn}
Let $\mathcal T$ be a tower diagram and let $f:\mathcal T\rightarrow \mathbb N$ be a function. Also let $n$ be the
maximum value of $f$.

\begin{enumerate}
\item The set $$T = \{ ((i,j), f(i,j))\mid (i,j)\in \mathcal T \}$$
is called a \textbf{tower tableau} of shape $\mathcal T$. In this case, we write $\mbox{\rm shape}(T)=\mathcal T$
and call $f(i,j)$ the \textbf{label} of $(i,j)$.
\item Let
$$T_{\le m} = \{ ((i,j),f(i,j))\in T\mid f(i,j) \le m\}$$
be the sub-tableau of $T$ (not necessarily a tower diagram)
consisting of the cells of $T$ with label less than or equal to $m$,
for any $m$.
\item  The tableau $T$ is called a \textbf{semi-standard tower tableau} if the following conditions are satisfied.
\begin{enumerate}
\item The set $C_{T}$ of corners with maximal label $n$ is not empty.
\item Letting $c$ be the cell in $C_{T}$ with minimal flight number, the tower tableau $c \nwarrow
T_{\le n}$ is semi-standard.
\end{enumerate}
\end{enumerate}
\end{defn}
It is easy to prove that when the function $f$ is injective, with the image $\{1,2,3,\ldots n\}$, then being semi-standard
is equivalent to being standard. Moreover, it follows from the definition that one can associate a standard tower
tableau to any given semi-standard tower tableau, in a unique way. Indeed, let $T$ be a semi-standard tower
tableau, and let
$l$ be the number of cells in $T$. Also let $\mathcal T$ be the shape of $T$. Then the standard tower tableau
$S(T)$, called the \textbf{standardization} of $T$, is defined recursively as follows.

If $l$ is equal to one, then there is a unique standard tower
tableau of this shape, and we let $S(T)$ be this unique tableau. For
$l>1$, let $c$ be the unique corner cell of $T$ with maximal label
and minimal flight number. Then we define the tableau $S(T)$ as the
standard tower tableau of shape $\mathcal T$ where the cell $c$ has
label $l$ and $$c\nwarrow S(T) =S(c\nwarrow T).$$ 
It is clear from
its construction that the standardization of a semi-standard tower
tableau is a standard tower tableau. Now we can associate two
sequences of positive integers to a semi-standard tower  tableau
$T$. One of the sequences is the reading word of the standardization
$S(T)$ of $T$, while the other one is the sequence of labels of $T$
arranged in weakly increasing order. The following example
illustrates a semi-standard tower tableau and its standardization
respectively.

\begin{center}
\begin{picture}(100,60)
\put(-25,25){$T=$} \multiput(0,0)(0,0){1}{\line(1,0){100}}
\multiput(0,0)(0,0){1}{\line(0,1){60}}
\multiput(0,10)(2,0){50}{\line(0,1){.1}}
\multiput(0,20)(2,0){50}{\line(0,1){.1}}
\multiput(0,30)(2,0){50}{\line(0,1){.1}}
\multiput(0,40)(2,0){50}{\line(0,1){.1}}
\multiput(0,50)(2,0){50}{\line(0,1){.1}}
\multiput(0,60)(2,0){50}{\line(0,1){.1}}
\multiput(10,0)(0,2){30}{\line(1,0){.1}}
\multiput(20,0)(0,2){30}{\line(1,0){.1}}
\multiput(30,0)(0,2){30}{\line(1,0){.1}}
\multiput(40,0)(0,2){30}{\line(1,0){.1}}
\multiput(50,0)(0,2){30}{\line(1,0){.1}}
\multiput(60,0)(0,2){30}{\line(1,0){.1}}
\multiput(70,0)(0,2){30}{\line(1,0){.1}}
\multiput(80,0)(0,2){30}{\line(1,0){.1}}
\multiput(90,0)(0,2){30}{\line(1,0){.1}}
\multiput(80,0)(0,2){30}{\line(1,0){.1}}
\multiput(90,0)(0,2){30}{\line(1,0){.1}} \put(10,0){\tableau{{8}}}
\put(20,30){\tableau{{10}\\{9}\\{8}\\{7}}}
\put(30,10){\tableau{{10}\\{3}}} \put(40,0){\tableau{{2}}}
\put(60,20){\tableau{{10}\\{4}\\{3}}}
\end{picture}
\hspace{0.5in}
\begin{picture}(100,60)
\put(-35,25){$S(T)=$} \multiput(0,0)(0,0){1}{\line(1,0){100}}
\multiput(0,0)(0,0){1}{\line(0,1){60}}
\multiput(0,10)(2,0){50}{\line(0,1){.1}}
\multiput(0,20)(2,0){50}{\line(0,1){.1}}
\multiput(0,30)(2,0){50}{\line(0,1){.1}}
\multiput(0,40)(2,0){50}{\line(0,1){.1}}
\multiput(0,50)(2,0){50}{\line(0,1){.1}}
\multiput(0,60)(2,0){50}{\line(0,1){.1}}
\multiput(10,0)(0,2){30}{\line(1,0){.1}}
\multiput(20,0)(0,2){30}{\line(1,0){.1}}
\multiput(30,0)(0,2){30}{\line(1,0){.1}}
\multiput(40,0)(0,2){30}{\line(1,0){.1}}
\multiput(50,0)(0,2){30}{\line(1,0){.1}}
\multiput(60,0)(0,2){30}{\line(1,0){.1}}
\multiput(70,0)(0,2){30}{\line(1,0){.1}}
\multiput(80,0)(0,2){30}{\line(1,0){.1}}
\multiput(90,0)(0,2){30}{\line(1,0){.1}}
\multiput(80,0)(0,2){30}{\line(1,0){.1}}
\multiput(90,0)(0,2){30}{\line(1,0){.1}} \put(10,0){\tableau{{7}}}
\put(20,30){\tableau{{10}\\{8}\\{6}\\{5}}}
\put(30,10){\tableau{{11}\\{3}}} \put(40,0){\tableau{{1}}}
\put(60,20){\tableau{{9}\\{4}\\{2}}}
\end{picture}
\end{center}

Observe that $57483425964$ is the reading word of the tableau $S(T)$ whereas
$23347889(10)(10)(10)$ is the sequence of labels in $T$ ordered in
weakly increasing fashion.

\subsection{Recollections on balanced labeling}\label{Section:Balanced}
In this section, we introduce basic definitions related to balanced
labeling of Rothe diagrams. Let $\omega\in S_n$ be a permutation. The {\textbf{\it
inversion diagram}} of $\omega$  is  the diagram defined by
$$\Inv(w)=\{ (i,j) ~|~ i<j,~ \omega_i>\omega_j\} \subseteq
[n]\times [n].$$ Clearly the inversion diagram of $\omega$ encodes
the inversions of $w$ . Equivalent to $\Inv(\omega)$, we also define
the {\it Rothe diagram } $\D_\omega$ of $\omega$ as the set given by
$$
\D_\omega =\{ (i,\omega_j) ~|~ i<j,~ \omega_i>\omega_j\} \subseteq
[n]\times [n].
$$
The Rothe diagram is obtained in the following way. For any diagram $\D$, and any $(i,j) \in \D$, we define the
{\it hook $H_{(i,j)}(\D)$ with vertex} $(i,j)$ as the set
$$H_{(i,j)}(\D)= \{ (r,s) \in \D ~|~  r\geq i, s= j \}\cup \{ (r,s) \in \D ~|~  r=i, s\geq j\}.$$
Visually, we take all the cells in $\D$ which are either on the same row as $(i,j)$ and to the right of it, or in the
same column as $(i,j)$ and below it.

Now, to determine the Rothe diagram $\D_\omega$ of $\omega$, we
remove all the hooks $H_{(i,w_i)} \left( [n]\times[n]  \right)$ from
$[n]\times[n]$. The remaining cells are the cells of the Rothe
diagram and the remaining hooks  of $(i,j) \in \D_\omega$ are
denoted by  $H_{(i,j)}(\omega)$.

\begin{ex}Let $\omega = 35421$. The diagram on the left shows the $5\times 5$ array with the hooks that are to
be removed to obtain the Rothe diagram $\D_\omega$ of $\omega$ which is shown on the right.
\begin{align*}
\begin{Young}
&& $\times$ &$\cdot$ &$\cdot$\cr
& &$\cdot$  & &$\times$\cr
 &&$\cdot$  &$\times$&$\cdot$\cr
 &$\times$&$\cdot$&$\cdot$ &$\cdot$\cr
$\times$&$\cdot$ &$\cdot$&$\cdot$  &$\cdot$\cr
\end{Young}&&&
\begin{Young}
$\bigcirc$&$\bigcirc$&&&\cr
$\bigcirc$&$\bigcirc$&&$\bigcirc$&\cr
$\bigcirc$&$\bigcirc$&&&\cr
$\bigcirc$&&&&\cr
&&&&\cr
\end{Young}
\end{align*}
\end{ex}

Next, recall that the right descents of the permutation $\omega$ form the set  $\Des(\omega)=\{i ~|~ \omega_i>
\omega_{i+1} \}$.
The Rothe diagram can be used to describe the set $\Des(\omega)$. More precisely, we have the following
correspondence
$$\begin{array}{ccc} \Des(\omega) & \leftrightarrow & \{(i,\omega_{i+1})\mid (i,\omega_{i+1}) \in \D_\omega\} \\
                                      i & \leftrightarrow &  (i,\omega_{i+1})\\
                                      \end{array}$$
 A cell of $\D_\omega$ that corresponds to a descent of $\omega$ under this correspondence is called a
 {\it border cell} of  $\D_\omega$. As we show in the next section, there is a close relation between border cells
 in Rothe diagrams and corner cells in tower diagrams.

Now, a labeling $T: \D_\omega \mapsto \mathbb N^+ $ is called a {\it
balanced labeling} if for every hook $H_{(i,j)}(\omega)$, the label
of the vertex cell $(i,j)$ does not change after  reordering labels
in $H_{(i,j)}(\omega)$ in a weakly decreasing way from bottom to top
and left to right. For example, the following labeling of the
diagram from the previous example is balanced.
\begin{align*}
\begin{Young}
2&1&&&\cr
4&3&&4&\cr
5&2&&&\cr
6&&&&\cr
&&&&\cr
\end{Young}
\end{align*}
Further, a balanced labeling of $\D_\omega$  is called {\it column
strict} if the numbers in each column do not repeat and is called
{\it injective} if each of the labels $\{1,2,\ldots, l\}$ appears in
$\D_\omega$  exactly once where $l$ is the size of $\D_\omega$. For
example, the above balanced labeling is column-strict but not
injective.

We denote by $\BL(\omega)$, $\IBL(\omega)$ and $\CBL(\omega)$ the set of all balanced labelings, injective
balanced labelings and column strict balanced labelings of the Rothe diagram of $\omega$, respectively.

A connection  between column strict balanced labelings  of
$\D_\omega$ and the reduced decomposition  of $\omega$ is given in
\cite{FGRS}.  Let $\omega\in S_n$ be of length $l(\omega)=k$. It is
well-known that the length $l(\omega)$ is also equal to the size of
the inversion set $\Inv(\omega)$. This implies that each
transposition corresponds to a unique inversion $(i,j)$ and hence a
cell $(i,\omega_j)$ in $\D_\omega$.

This connection is made explicit in \cite{FGRS} by associating an injective balanced labeling
\begin{equation*}\label{map-T_a}
D_\alpha:\D_\omega \mapsto [k]
\end{equation*}
of $\D_\omega$ to each reduced word $\alpha=
{\alpha_1}{\alpha_2}~\ldots~{\alpha_k}$ of $\omega$, by assigning
$r\in [k]$ to a cell $(i,j)\in\D_\omega$, if $s_{\alpha_r}$
transposes $\omega_i$ and $j$ where $j< \omega_i$. We call
$D_\alpha$ the canonical labeling of $\D_\omega$ associated to
$\alpha$. We illustrate the definition with the  diagram of the
previous example. Let $\alpha = 42341234$ be a reduced word of
$\omega$. Then the corresponding injective balanced labeling is
given as follows.
\begin{align*}
\begin{Young}
5&2&&&\cr
6&3&&1&\cr
7&4&&&\cr
8&&&&\cr
&&&&\cr
\end{Young}
\end{align*}

One of the main results in \cite{FGRS} states that the above construction induces a bijective correspondence
between Red$(\omega)$ and IBL$(\omega)$, see Theorem 2.4 in \cite{FGRS}. The inverse bijection is given by 
constructing
a reduced word from a given injective balanced labeling, in the following way. Let $D\in$IBL$(\omega)$, and 
suppose that the length of $\omega$ is $k$. Then for each
$i$ with $1\le i\le k$, we define $$\alpha_i = I(i) + R^+(i) + U^+(i)$$ where $I(i)$ denotes the row index of the
cell in $D$ with label $i$ and $R^+(i)$ (resp. $U^+(i)$) denotes the number of entries $j>i$ in the same
row as $i$ (resp. above $i$ in the same column). Then by Theorem 5.2 in \cite{FGRS}, the word $\alpha=
\alpha_1\alpha_2\ldots\alpha_k$ is a reduced word for the permutation $\omega$.

\subsection{Schubert polynomials and Stanley symmetric functions}
Finally, we recall the formula for the Schubert polynomial  $\mathfrak S_\omega$ of  $\omega \in S_n$ which is
given by Billey, Jockusch and Stanley in \cite{BJS} together with Stanley symmetric functions $F_\omega$,
 defined by Stanley in \cite{S}.

We call a pair $(a,i)$ of sequences $a=(a_1,a_2,\ldots,a_k)$ and $
i=(i_1,i_2,\ldots,i_k )$ of positive integers {\it compatible for
$\omega$} if $a_1a_2\ldots a_k$  is a reduced word for $\omega$ and
if for each $1\leq r \leq k$, we have $i_r\leq a_r$,  $i_r \leq
i_{r+1}$ and    $i_r <i_{r+1}$ whenever $a_r<a_{r+1}$. Then the
Billey-Jockusch-Stanley description of the polynomial $\mathfrak
S_\omega$ is given by
$$ \mathfrak S_{\omega}(x_1,x_2,\ldots)=\sum_{(a,i) \in \cp(w)}    x_{i_1}x_{i_2}\ldots x_{i_k}$$
where the sum is over all compatible pairs for the permutation
$\omega$, see  \cite[Theorem 1.1]{BJS}.

The result of  Fomin, Greene, Reiner and Shimozono in \cite{FGRS} shows that the
compatible pairs for $\omega$ can be determined by certain balanced
labelings of the Rothe diagram. More precisely, they show that there
is a bijective correspondence between the set of all compatible
pairs for $\omega$ and the set of all flagged column-strict balanced
labeling of $\D_\omega$. Here, flagged labeling is a labeling where
every cell in the $i$-th row is labelled by an integer less than or equal to
$i$.

In \cite{S}, Stanley defines a family of functions, the so-called Stanley symmetric functions, as the limits of
Schubert polynomials. Fomin, Greene, Reiner and Shimonozo showed in \cite[Theorem 4.3]{FGRS} that
taking the limit amounts to removing the flag conditions from the above equality.

\section{Balanced labelings via tower tableaux}\label{Section:BalancedLabel}
In this section, we establish the connection between balanced labelings and semi-standard tower tableaux and
hence the identification of being balanced via the sliding algorithm. To achieve this, we first describe how we can
remove an initial segment from a given standard tower tableaux. This will lead us to a generalization of the
Rothification algorithm described in \cite[Section 7]{CT}.

Recall that for a standard tower tableaux $T$ of size $n>0$, and for any $k$, the tower tableau $T_{\leq k}$ is
obtained by restricting its labels  to
$1,\ldots,k$, for some $1\leq k \leq n$. Moreover if $\alpha:= \alpha_1\alpha_2\ldots\alpha_n $ is the reading
word of $T$ then $\alpha_1\alpha_2\ldots\alpha_k $ is the reading word of $T_{\leq k}$ and the cell labeled by
$k$ in $T$ is a corner cell in $T_{\leq k}$. In the following  we call $T_{\leq k}$ an  \textbf{initial segment} of $T$.

It is clear that removing an initial segment $T_{\leq k}$ from a standard tower tableau $T$ reduces the diagram
$T$ to the tower diagram of the remaining reduced word $\alpha_{k+1}\ldots\alpha_n$. To be able to use this
reduction in inductive arguments, we present the following recursive algorithm.

\begin{lem}\label{Lemma:key}
Let $T=(T_1,T_2,\ldots)$ be a standard tower tableau with reading
word $\alpha = \alpha_1\alpha_2\ldots\alpha_n$ and suppose that the
cell labeled by $1$, say $c$, is contained in the tower $T_i$ for
some $ i\geq 1$. Then the standard tower tableau $S$ corresponding
to the word $\alpha^\prime = \alpha_2\ldots\alpha_n$ is obtained
from $T$ in the following steps:
\begin{enumerate}
\item First remove  $[c,1]$ from $T_i$ and push the remaining cells down in $T_i$
\item Then switch the resulting tower with  the adjacent tower $T_{i+1}$ (possibly empty)
\item Finally decrease all the labels in the resulting tableau by $1$, in order to obtained $S$.
\end{enumerate}
\end{lem}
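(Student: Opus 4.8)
The plan is to proceed by induction on $n$, the size of $T$, and to track what the sliding algorithm does to the very first letter $\alpha_1$ when it is prepended to the reduced word $\alpha' = \alpha_2\ldots\alpha_n$ producing $S$. Since the sliding and flight algorithms are inverse to each other (as recalled in the excerpt), the cleanest route is to show that applying the \emph{flight algorithm} to the cell $c$ labeled $1$ in $T$ — i.e.\ computing $c \nwarrow T$, which by definition of standardness and of the reading word must be possible and must yield the tower tableau of $\alpha_2\ldots\alpha_n$ after a label shift — produces exactly the effect described in steps (1)--(3). In other words, I would reinterpret the statement as: \emph{the cell with label $1$ is the corner cell removed last by the flight algorithm, and its flight path, together with the tower-surgery that removing it induces, is precisely ``delete $[c,1]$ from $T_i$, collapse, and transpose $T_i \leftrightarrow T_{i+1}$.''}

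First I would pin down where $c$ sits. Since $c$ has label $1$, it is the cell slid into the \emph{empty} diagram by the first letter $\alpha_1$; by the sliding rules (case (i)) a single cell slid into the empty diagram comes to rest on the $x$-axis, so $c = (\alpha_1 - 1, 0)$ is a bottom cell, and in the notation of the lemma $c = [c,1]$ is the bottom cell of $T_i$ with $i = \alpha_1$. Now here is the key geometric observation I would establish: in $T$, the cell $c = (i-1,0)$ has a flight path consisting of just itself (its main diagonal is the segment from $(i-1,0)$ to $(i,0)$'s reflection... more precisely the line $x+y=i$ through its south-east corner region), \emph{except} that the sliding history can have stacked cells directly on top of $c$ in tower $T_i$ and can have placed the contents of what is now $T_{i+1}$ one diagonal over. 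The heart of the argument is the verification that the flight path of $c$ never ``hooks'' into $T_{i+1}$ in a way that blocks it — this is where the hypothesis that $\alpha$ is the reading word (equivalently, that $1$ labels a cell that is a corner of the full tableau $T$) is used, via the recursive corner condition in the definition of standard tableaux.

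The main obstacle, then, is precisely step (2): showing that removing $[c,1]$ and collapsing $T_i$ by one, followed by swapping towers $i$ and $i+1$, reproduces the tower diagram of $\alpha_2\ldots\alpha_n$ on the nose — one must check that no other tower is disturbed and that the swap is forced. I would handle this by the technical description of sliding given in the appendix (which the excerpt explicitly reserves for exactly such a lemma): slide $\alpha_1$ back onto $S$ and verify step-by-step that it lands on the bottom of tower $i+1$ of $S$, pushing that tower up by one and leaving its former contents where the swap put them, thereby undoing (1)--(3). Concretely the induction step runs: strip the cell labeled $n$ (a corner of $T$) off both $T$ and $S$; apply the inductive hypothesis to $T_{\le n-1}$; then argue that re-attaching that corner commutes with operations (1)--(3), which reduces to a local check that the corner cell of $T$ is unaffected by the surgery near tower $i$ — true because a corner cell's flight path, being confined to the diagram, cannot pass through the single bottom cell $c$ of $T_i$ without $c$ itself failing to be a corner, contradicting standardness. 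Routine bookkeeping of labels gives step (3).
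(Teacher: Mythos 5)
There is a genuine gap, and it sits exactly where the paper's proof does all of its work. Your induction skeleton (strip the cell labeled $n$, apply the inductive hypothesis to $T_{\le n-1}$, then re-attach) is the same as the paper's, but your justification of the re-attachment step is wrong. You claim it reduces to "a local check that the corner cell of $T$ is unaffected by the surgery near tower $i$." That is false in precisely the interesting cases: the slide of $\alpha_n$ (arriving at tower $T_i$ as some $a=\alpha_n+r$ after zigzags) may terminate \emph{on} $T_i$, or \emph{on} $T_{i+1}$, or pass through both with or without a zigzag at $T_i$; in each of these cases the landing cell in the surgered tableau $T'_{\le n-1}$ is a \emph{different} cell from the one in $T_{\le n-1}$, and one must check it is exactly the cell corresponding under the tower swap. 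The paper's proof is essentially nothing but this case analysis, and every case hinges on an auxiliary fact your proposal never identifies: if the cell labeled $1$ lies in $T_i$, then $|T_i|>|T_{i+1}|$ (proved in the appendix by its own induction). This inequality is what guarantees that after the swap the $i$-th tower is no longer than the $(i+1)$-st, which is what forces the slide of $a$ into $T'_{\le n-1}$ to land in the matching swapped position. Without it the "local check" simply cannot be carried out.

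Two further points. The supporting argument you offer — that the flight path of the cell labeled $n$ cannot pass through $c$ "without $c$ itself failing to be a corner, contradicting standardness" — rests on a false premise: standardness only requires the cell labeled $1$ to be a corner of $T_{\le 1}$ (which is vacuous), not of $T$; in general $c=(i,0)$ is a bottom cell buried under the rest of $T_i$ and is not a corner of $T$ at all. Relatedly, your opening reframing of the lemma as "apply the flight algorithm to $c$, i.e.\ compute $c\nwarrow T$" is a category error: the flight algorithm removes the cell with the \emph{largest} label (deleting the last letter of the reading word, i.e.\ right multiplication), whereas this lemma deletes the \emph{first} letter (left multiplication by $s_{\alpha_1}$). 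Since $c$ need not be a corner of $T$, the expression $c\nwarrow T$ is not even defined, and the whole point of the lemma is that removing $c$ requires the extra surgery (collapse and tower swap) that corner removal never does.
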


We defer the proof to the appendix and have an example below.

\begin{ex} We illustrate the algorithm described in the lemma by the following standard tower
tableaux $T,S$ and $R$ whose  reading words are, respectively
$\tau=134534$, $\sigma=453451$ and $\rho=314354$. Observe that the
resulting tower tableaux $T',S'$ and $R'$ are in fact the recording
tableaux of $\tau'=34534$, $\sigma'=53451$ and $\rho'=14354$,
respectively, in the sliding algorithm.

\begin{center}
\begin{picture}(60,60)
\put(-20,30){T=}
 \multiput(0,0)(0,0){1}{\line(1,0){60}}
\multiput(0,0)(0,0){1}{\line(0,1){60}}
\multiput(0,10)(2,0){30}{\line(0,1){.1}}
\multiput(0,20)(2,0){30}{\line(0,1){.1}}
\multiput(0,30)(2,0){30}{\line(0,1){.1}}
\multiput(0,40)(2,0){30}{\line(0,1){.1}}
\multiput(0,50)(2,0){30}{\line(0,1){.1}}
\multiput(0,60)(2,0){30}{\line(0,1){.1}}
\multiput(10,0)(0,2){30}{\line(1,0){.1}}
\multiput(20,0)(0,2){30}{\line(1,0){.1}}
\multiput(30,0)(0,2){30}{\line(1,0){.1}}
\multiput(40,0)(0,2){30}{\line(1,0){.1}}
\multiput(50,0)(0,2){30}{\line(1,0){.1}} \put(0,0){\tableau{{1}}}
\put(20,20){\tableau{{4}\\{3}\\{2}}} \put(30,10){\tableau{{6}\\{5}}}
\put(60,30){$\rightarrow$}
\end{picture}\hskip.2in
\begin{picture}(60,60)
\multiput(0,0)(0,0){1}{\line(1,0){60}}
\multiput(0,0)(0,0){1}{\line(0,1){60}}
\multiput(0,10)(2,0){30}{\line(0,1){.1}}
\multiput(0,20)(2,0){30}{\line(0,1){.1}}
\multiput(0,30)(2,0){30}{\line(0,1){.1}}
\multiput(0,40)(2,0){30}{\line(0,1){.1}}
\multiput(0,50)(2,0){30}{\line(0,1){.1}}
\multiput(0,60)(2,0){30}{\line(0,1){.1}}
\multiput(10,0)(0,2){30}{\line(1,0){.1}}
\multiput(20,0)(0,2){30}{\line(1,0){.1}}
\multiput(30,0)(0,2){30}{\line(1,0){.1}}
\multiput(40,0)(0,2){30}{\line(1,0){.1}}
\multiput(50,0)(0,2){30}{\line(1,0){.1}}
\put(20,20){\tableau{{4}\\{3}\\{2}}} \put(30,10){\tableau{{6}\\{5}}}
\put(60,30){$\rightarrow$}\end{picture}\hskip.5in
\begin{picture}(60,60)
\put(-20,30){$T^\prime$=}\multiput(0,0)(0,0){1}{\line(1,0){60}}
\multiput(0,0)(0,0){1}{\line(0,1){60}}
\multiput(0,10)(2,0){30}{\line(0,1){.1}}
\multiput(0,20)(2,0){30}{\line(0,1){.1}}
\multiput(0,30)(2,0){30}{\line(0,1){.1}}
\multiput(0,40)(2,0){30}{\line(0,1){.1}}
\multiput(0,50)(2,0){30}{\line(0,1){.1}}
\multiput(0,60)(2,0){30}{\line(0,1){.1}}
\multiput(10,0)(0,2){30}{\line(1,0){.1}}
\multiput(20,0)(0,2){30}{\line(1,0){.1}}
\multiput(30,0)(0,2){30}{\line(1,0){.1}}
\multiput(40,0)(0,2){30}{\line(1,0){.1}}
\multiput(50,0)(0,2){30}{\line(1,0){.1}}
\put(20,20){\tableau{{3}\\{2}\\{1}}} \put(30,10){\tableau{{5}\\{4}}}
\end{picture}
\end{center}

\vskip.1in
\begin{center}
\begin{picture}(60,60)
\put(-20,30){S=}
 \multiput(0,0)(0,0){1}{\line(1,0){60}}
\multiput(0,0)(0,0){1}{\line(0,1){60}}
\multiput(0,10)(2,0){30}{\line(0,1){.1}}
\multiput(0,20)(2,0){30}{\line(0,1){.1}}
\multiput(0,30)(2,0){30}{\line(0,1){.1}}
\multiput(0,40)(2,0){30}{\line(0,1){.1}}
\multiput(0,50)(2,0){30}{\line(0,1){.1}}
\multiput(0,60)(2,0){30}{\line(0,1){.1}}
\multiput(10,0)(0,2){30}{\line(1,0){.1}}
\multiput(20,0)(0,2){30}{\line(1,0){.1}}
\multiput(30,0)(0,2){30}{\line(1,0){.1}}
\multiput(40,0)(0,2){30}{\line(1,0){.1}}
\multiput(50,0)(0,2){30}{\line(1,0){.1}} \put(0,0){\tableau{{6}}}
\put(20,20){\tableau{{5}\\{4}\\{3}}}
\put(30,10){\tableau{{2}\\{1}}}\put(60,30){$\rightarrow$}
\end{picture}\hskip.2in
\begin{picture}(60,60)
\multiput(0,0)(0,0){1}{\line(1,0){60}}
\multiput(0,0)(0,0){1}{\line(0,1){60}}
\multiput(0,10)(2,0){30}{\line(0,1){.1}}
\multiput(0,20)(2,0){30}{\line(0,1){.1}}
\multiput(0,30)(2,0){30}{\line(0,1){.1}}
\multiput(0,40)(2,0){30}{\line(0,1){.1}}
\multiput(0,50)(2,0){30}{\line(0,1){.1}}
\multiput(0,60)(2,0){30}{\line(0,1){.1}}
\multiput(10,0)(0,2){30}{\line(1,0){.1}}
\multiput(20,0)(0,2){30}{\line(1,0){.1}}
\multiput(30,0)(0,2){30}{\line(1,0){.1}}
\multiput(40,0)(0,2){30}{\line(1,0){.1}}
\multiput(50,0)(0,2){30}{\line(1,0){.1}} \put(0,0){\tableau{{6}}}
\put(20,20){\tableau{{5}\\{4}\\{3}}} \put(30,0){\tableau{{2}}}
\put(60,30){$\rightarrow$}\end{picture}\hskip.2in
\begin{picture}(60,60)
\multiput(0,0)(0,0){1}{\line(1,0){60}}
\multiput(0,0)(0,0){1}{\line(0,1){60}}
\multiput(0,10)(2,0){30}{\line(0,1){.1}}
\multiput(0,20)(2,0){30}{\line(0,1){.1}}
\multiput(0,30)(2,0){30}{\line(0,1){.1}}
\multiput(0,40)(2,0){30}{\line(0,1){.1}}
\multiput(0,50)(2,0){30}{\line(0,1){.1}}
\multiput(0,60)(2,0){30}{\line(0,1){.1}}
\multiput(10,0)(0,2){30}{\line(1,0){.1}}
\multiput(20,0)(0,2){30}{\line(1,0){.1}}
\multiput(30,0)(0,2){30}{\line(1,0){.1}}
\multiput(40,0)(0,2){30}{\line(1,0){.1}}
\multiput(50,0)(0,2){30}{\line(1,0){.1}} \put(0,0){\tableau{{6}}}
\put(20,20){\tableau{{5}\\{4}\\{3}}}
 \put(40,0){\tableau{{2}}}
\put(60,30){$\rightarrow$}\end{picture}\hskip.5in
\begin{picture}(60,60)
\put(-20,30){$S^\prime$=}\multiput(0,0)(0,0){1}{\line(1,0){60}}
\multiput(0,0)(0,0){1}{\line(0,1){60}}
\multiput(0,10)(2,0){30}{\line(0,1){.1}}
\multiput(0,20)(2,0){30}{\line(0,1){.1}}
\multiput(0,30)(2,0){30}{\line(0,1){.1}}
\multiput(0,40)(2,0){30}{\line(0,1){.1}}
\multiput(0,50)(2,0){30}{\line(0,1){.1}}
\multiput(0,60)(2,0){30}{\line(0,1){.1}}
\multiput(10,0)(0,2){30}{\line(1,0){.1}}
\multiput(20,0)(0,2){30}{\line(1,0){.1}}
\multiput(30,0)(0,2){30}{\line(1,0){.1}}
\multiput(40,0)(0,2){30}{\line(1,0){.1}}
\multiput(50,0)(0,2){30}{\line(1,0){.1}} \put(0,0){\tableau{{5}}}
\put(20,20){\tableau{{4}\\{3}\\{2}}}
 \put(40,0){\tableau{{1}}}
\end{picture}
\end{center}

\vskip.1in

\begin{center}
\begin{picture}(60,60)
\put(-20,30){R=}
 \multiput(0,0)(0,0){1}{\line(1,0){60}}
\multiput(0,0)(0,0){1}{\line(0,1){60}}
\multiput(0,10)(2,0){30}{\line(0,1){.1}}
\multiput(0,20)(2,0){30}{\line(0,1){.1}}
\multiput(0,30)(2,0){30}{\line(0,1){.1}}
\multiput(0,40)(2,0){30}{\line(0,1){.1}}
\multiput(0,50)(2,0){30}{\line(0,1){.1}}
\multiput(0,60)(2,0){30}{\line(0,1){.1}}
\multiput(10,0)(0,2){30}{\line(1,0){.1}}
\multiput(20,0)(0,2){30}{\line(1,0){.1}}
\multiput(30,0)(0,2){30}{\line(1,0){.1}}
\multiput(40,0)(0,2){30}{\line(1,0){.1}}
\multiput(50,0)(0,2){30}{\line(1,0){.1}} \put(0,0){\tableau{{2}}}
\put(20,20){\tableau{{5}\\{3}\\{1}}}
\put(30,10){\tableau{{6}\\{4}}}\put(60,30){$\rightarrow$}
\end{picture}\hskip.2in
\begin{picture}(60,60)
\multiput(0,0)(0,0){1}{\line(1,0){60}}
\multiput(0,0)(0,0){1}{\line(0,1){60}}
\multiput(0,10)(2,0){30}{\line(0,1){.1}}
\multiput(0,20)(2,0){30}{\line(0,1){.1}}
\multiput(0,30)(2,0){30}{\line(0,1){.1}}
\multiput(0,40)(2,0){30}{\line(0,1){.1}}
\multiput(0,50)(2,0){30}{\line(0,1){.1}}
\multiput(0,60)(2,0){30}{\line(0,1){.1}}
\multiput(10,0)(0,2){30}{\line(1,0){.1}}
\multiput(20,0)(0,2){30}{\line(1,0){.1}}
\multiput(30,0)(0,2){30}{\line(1,0){.1}}
\multiput(40,0)(0,2){30}{\line(1,0){.1}}
\multiput(50,0)(0,2){30}{\line(1,0){.1}} \put(0,0){\tableau{{2}}}
\put(20,10){\tableau{{5}\\{3}}} \put(30,10){\tableau{{6}\\{4}}}
\put(60,30){$\rightarrow$}\end{picture}\hskip.2in
\begin{picture}(60,60)
\multiput(0,0)(0,0){1}{\line(1,0){60}}
\multiput(0,0)(0,0){1}{\line(0,1){60}}
\multiput(0,10)(2,0){30}{\line(0,1){.1}}
\multiput(0,20)(2,0){30}{\line(0,1){.1}}
\multiput(0,30)(2,0){30}{\line(0,1){.1}}
\multiput(0,40)(2,0){30}{\line(0,1){.1}}
\multiput(0,50)(2,0){30}{\line(0,1){.1}}
\multiput(0,60)(2,0){30}{\line(0,1){.1}}
\multiput(10,0)(0,2){30}{\line(1,0){.1}}
\multiput(20,0)(0,2){30}{\line(1,0){.1}}
\multiput(30,0)(0,2){30}{\line(1,0){.1}}
\multiput(40,0)(0,2){30}{\line(1,0){.1}}
\multiput(50,0)(0,2){30}{\line(1,0){.1}} \put(0,0){\tableau{{2}}}
\put(20,10){\tableau{{6}\\{4}}} \put(30,10){\tableau{{5}\\{3}}}
\put(60,30){$\rightarrow$}\end{picture}\hskip.5in
\begin{picture}(60,60)
\put(-20,30){$R^\prime$=}\multiput(0,0)(0,0){1}{\line(1,0){60}}
\multiput(0,0)(0,0){1}{\line(0,1){60}}
\multiput(0,10)(2,0){30}{\line(0,1){.1}}
\multiput(0,20)(2,0){30}{\line(0,1){.1}}
\multiput(0,30)(2,0){30}{\line(0,1){.1}}
\multiput(0,40)(2,0){30}{\line(0,1){.1}}
\multiput(0,50)(2,0){30}{\line(0,1){.1}}
\multiput(0,60)(2,0){30}{\line(0,1){.1}}
\multiput(10,0)(0,2){30}{\line(1,0){.1}}
\multiput(20,0)(0,2){30}{\line(1,0){.1}}
\multiput(30,0)(0,2){30}{\line(1,0){.1}}
\multiput(40,0)(0,2){30}{\line(1,0){.1}}
\multiput(50,0)(0,2){30}{\line(1,0){.1}} \put(0,0){\tableau{{1}}}
\put(20,10){\tableau{{5}\\{3}}} \put(30,10){\tableau{{4}\\{2}}}
\end{picture}
\end{center}
\end{ex}

Using this result, we can improve the Rothification algorithm which
gives the passage from the tower diagram of a permutation to its
Rothe diagram. Originally, the Rothification makes use of the natural labeling of the given
tower diagram. The next result shows that one can achieve the same
result starting with an arbitrary standard tower tableau. We shall
have an improvement by also associating a labeling to the resulting
Rothe diagram.

We first recall our notation from \cite[Section 7]{CT}. Let $T$ be a standard tower tableau of size $l$ and 
$\Upsilon= (T,T^-)$ be the
complete tower tableau corresponding to $T$. Recall that, the complete tower diagram of a permutation is
obtained by sliding a reduced word of the permutation to the first quadrant with the $x$-axis being the border and
sliding the reverse of the word to the third quadrant with the $y$-axis being the border. See the example below.

Now we construct the set
$$
I = \{ (u,v): ([u,\mathrm{label}(u)],[v,\mathrm{label}(v)])\in T\times T^-, \mathrm{label}(u)+\mathrm{label}(v) = l+1\}
$$
of all pairs of cells from the complete tower tableau $\Upsilon$
whose labels sum up to $l+1$. Then for any $(u,v)\in I$, the
vertical shadow of $u$ and the horizontal shadow of $v$ intersect at
the point $(u_1, -v_2)$ where we write $u =(u_1,u_2)$ and $v=
(v_1,v_2)$. Then the Rothification of the complete tower tableau
$\Upsilon$ associated to $T$ is the tableau
$$
R_{T} = \{ [(u_1,-v_2),\mathrm{label}(u)]: (u,v)\in I \}.
$$

\begin{ex}
To illustrate the definition, consider the following standard tower tableau.
\begin{center}
\begin{picture}(80,60)
\multiput(0,0)(0,0){1}{\line(1,0){80}}
\multiput(0,0)(0,0){1}{\line(0,1){60}}
\multiput(0,10)(2,0){40}{\line(0,1){.1}}
\multiput(0,20)(2,0){40}{\line(0,1){.1}}
\multiput(0,30)(2,0){40}{\line(0,1){.1}}
\multiput(0,40)(2,0){40}{\line(0,1){.1}}
\multiput(0,50)(2,0){40}{\line(0,1){.1}}
\multiput(0,60)(2,0){40}{\line(0,1){.1}}
\multiput(10,0)(0,2){30}{\line(1,0){.1}}
\multiput(20,0)(0,2){30}{\line(1,0){.1}}
\multiput(30,0)(0,2){30}{\line(1,0){.1}}
\multiput(40,0)(0,2){30}{\line(1,0){.1}}
\multiput(50,0)(0,2){30}{\line(1,0){.1}}
\multiput(60,0)(0,2){30}{\line(1,0){.1}}
\multiput(70,0)(0,2){30}{\line(1,0){.1}} \put(0,0){\tableau{{2}}}
\put(20,20){\tableau{{5}\\{3}\\{1}}} \put(30,10){\tableau{{6}\\{4}}}
\end{picture}
\end{center}
The reading word of the tableau is $314354$. According to the above definition, the Rothification of  the
corresponding complete tower tableau  is given as follows.
$$
\put(0,0){\line(0,-1){60}}
\put(0,0){\line(-1,0){30}}
\put(0,0){\line(0,1){30}}
\put(0,0){\line(1,0){50}}
\put(0,0){\tableau{{2}}}
\put(20,20){\tableau{{5}\\{3}\\{1}}}
\put(30,10){\tableau{{6}\\{4}}}
\put(-10,0){\tableau{\\{5}}}
\put(-20,-20){\tableau{\\{4}}} \put(-10,-20){\tableau{\\{3}}}
\put(-20,-30){\tableau{\\{2}}} \put(-10,-30){\tableau{\\{1}}}
\put(-10,-40){\tableau{\\{6}}}
\put(0,-9){$\, 2$} \put(20,-30){$\;3$} \put(30,-30){$\;4$}
\put(20,-40){$\;5$} \put(30,-40){$\;6$} \put(20,-50){$\;1$}
$$

\end{ex}

Now we have the following result, cf \cite[Theorem 7.3]{CT}.
\begin{thm}
Let $\Upsilon = (T,T^-)$ be a complete tower tableau and let
$\omega$ be the corresponding permutation. Then $\mbox{\rm
shape}(R_T) = \D_\omega$.
\end{thm}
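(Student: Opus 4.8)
The plan is to argue by induction on $l$, the number of cells of $T$ (which equals the length of $\omega$). When $l\le 1$ the statement is immediate, since then $\D_\omega$ and $R_T$ each have at most one cell and a direct inspection of positions settles it. Assume $l\ge 2$, write $\alpha=\alpha_1\alpha_2\cdots\alpha_n$ for the reading word of $T$, and put $p:=\alpha_1$. The cell $c$ carrying label $1$ is the first cell slid into the empty diagram, so (by the first case of the sliding rule, and since its flight number in $T_{\le 1}$ is $p$) it is the bottom cell of the tower $T_p$. Let $S$ be the standard tower tableau of $\alpha'=\alpha_2\cdots\alpha_n$; then $\alpha'$ is a reduced word for $\omega'=s_p\omega$, of length $l-1$. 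By Lemma~\ref{Lemma:key}, $S$ is obtained from $T$ by deleting $c$ from the bottom of $T_p$ (pushing the remaining cells of that tower down), interchanging the resulting tower with $T_{p+1}$, and decreasing every label by one.

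Next I would compare the complete tower tableaux $\Upsilon=(T,T^-)$ and $\Upsilon'=(S,S^-)$. Since the reverse of $\alpha'$ is the reverse of $\alpha$ with its last letter removed, and removing the last letter of a reading word amounts to deleting the corner cell of largest label, the mirror $S^-$ is exactly the initial segment $(T^-)_{\le l-1}$: it is $T^-$ with the corner cell $c^-$ of label $l$ removed, every remaining cell keeping its position and its label. Hence the index set $I=\{(u,v)\in T\times T^-\mid \mathrm{label}(u)+\mathrm{label}(v)=l+1\}$ splits as $I=I'\sqcup\{(c,c^-)\}$, where $I'$ is the index set of $\Upsilon'$: decreasing the $T$-labels by one turns ``labels sum to $l+1$'' into ``labels sum to $l$'', and the only pair lost is the one containing $c$. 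Under this identification the $T^-$-cell $v$ of a pair $(u,v)\in I'$ is unchanged, so the contribution $-v_2$ to the vertical coordinate of the associated Rothification cell is unchanged, while the tower index of the $T$-cell $u$ changes exactly as Lemma~\ref{Lemma:key} prescribes: it is fixed unless $u$ lies in $T_p$ or $T_{p+1}$, and in that case the towers $p$ and $p+1$ trade places.

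Feeding this into the definition of the Rothification gives $\shape(R_T)=\sigma_p\bigl(\shape(R_S)\bigr)\cup\{r_0\}$, where $\sigma_p$ interchanges columns $p$ and $p+1$ and $r_0$ is the single cell produced by the pair $(c,c^-)$, which lies in column $p$ (the tower index of $c$). By the induction hypothesis $\shape(R_S)=\D_{\omega'}$. On the Rothe side, left multiplication by $s_p$ interchanges the values $p$ and $p+1$ in one-line notation, and from $(i,j)\in\D_\tau\iff \tau_i>j$ and $\tau^{-1}(j)>i$ one checks directly that $\D_\omega$ is obtained from $\D_{\omega'}$ by interchanging columns $p$ and $p+1$ and adjoining the single cell $((\omega')^{-1}(p),p)$, the new inversion cell. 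Thus the whole theorem reduces to the identity $r_0=((\omega')^{-1}(p),p)$; the column already agrees, so what remains is to show that $r_0$ lies in row $(\omega')^{-1}(p)$, i.e.\ that the reflection of the top corner cell $c^-$ of $T^-$ lands in that row.

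The hard part is exactly this last identity; everything before it is bookkeeping with Lemma~\ref{Lemma:key}. It is the row-coordinate content of \cite[Theorem 7.3]{CT} applied to the single cell $c^-$, and I would settle it either by quoting that portion of the earlier theorem, or by running the same peeling argument on $T^-$: this is the tower tableau of $\omega^{-1}$, in which $c^-$ is the top cell with flight number $p$, so that its horizontal shadow should meet the Rothe grid in the row recording the position of the value $p$, namely $(\omega')^{-1}(p)$. One should also treat separately the degenerate case where $T_{p+1}$ is empty --- there ``interchanging towers $p$ and $p+1$'' just shifts $T_p\setminus\{c\}$ one column to the right --- but this causes no real difficulty.
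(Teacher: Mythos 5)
Your argument is correct and is, up to a left/right symmetry, the same as the paper's: both induct on the length by stripping one letter from the reading word, use Lemma~\ref{Lemma:key} to track the resulting swap of adjacent towers through the Rothification, match it against the effect of multiplying $\omega$ by a simple transposition on $\D_\omega$, and then fall back on the earlier results of \cite{CT} for the natural tableau to pin down the one coordinate of the added cell ($r_0$ in your notation) that the bookkeeping leaves open. The only difference is which end of the word you strip: you remove $\alpha_1$, so Lemma~\ref{Lemma:key} acts on $T$, yields a swap of columns $p$ and $p+1$, and is compared with left multiplication by $s_p$, whereas the paper removes $\alpha_l$, so the lemma acts on the virtual tableau $T^-$, yields a swap of rows $\alpha_l$ and $\alpha_l+1$, and is compared with right multiplication via \cite[Proposition~6.1]{CT}.
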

\begin{proof}
The proof is very similar to that of Theorem 7.3 in \cite{CT}.
We include a full proof for convenience. Argue by induction on the
length $l$ of $\omega$. The case $l=1$ is trivial. Suppose $l>1$.
Let $\alpha = \alpha_1\alpha_2\ldots\alpha_l$ be the reading word of
$T$ and let $\tilde\Upsilon = (\tilde T,\tilde T^{-})$ be the complete
tower diagram of $\tilde\alpha = \alpha_1\alpha_2\ldots\alpha_{l-1}$. By the induction hypothesis, we
have $\mathrm{shape}(R_{\tilde T}) = \D_{\tilde\omega}$ where
$\tilde\omega = \omega\alpha_l$.

Here, to obtain the Rothification of the complete tower diagram of $\tilde\omega$, we use Lemma
\ref{Lemma:key}.
Assume that the cell with label $l$ in the tower diagram $T$ of $\omega$ is on the $i$-th
column and the cell with label $1$ in the virtual tower diagram $T^-$ of $\omega$ is on the row $j$.
 Then by the construction of the virtual tower diagram, we have $j= \alpha_l$.

With these notations, Lemma \ref{Lemma:key}, the Rothefication
$R_{\tilde\omega}$ of $\tilde\omega$ is obtained from $R_\omega$ by
removing the cell $(j,i)$ from $R_\omega$ and then switching the
rows $j$ and $j+1$. By the induction hypothesis, the shape of
$R_{\tilde T}$ is equal to the Rothe diagram $\D_{\tilde\omega}$ of
$\tilde\omega$. Now $\omega = \tilde\omega\alpha_l$ and $l(\omega) =
l(\tilde\omega)+1$. Thus by Proposition 6.1 in \cite{CT}, the Rothe
diagram of $\omega$ is obtained from $\D_{\tilde\omega}$ by
switching the rows $j$ and $j+1$ and adding the cell $(j,
\omega(j+1))$ to the diagram.

Finally, to show that the equality shape$(R_{T}) = \D_\omega$ holds,
we have to prove that the cell removed from the tower tableau $T$ is
the same as the one that is added at the end. In order to prove
this, it is sufficient to show that the column numbers of these
cells are the same. But by Theorem 6.2 in \cite{CT}, the equality
$\mathrm{shape}(R_\mathbb T) = \D_\omega$ holds where $\mathbb T$ is
the natural tower tableau of $\omega$. Moreover, by Theorem 4.1 in
\cite{CT}, the equality shape$(T) = $shape$ (\mathbb T)$ holds. Now
the first equality tells us that the cell added at the end is
contained in the tower diagram shape$(\mathbb T)$ whereas the second
equality tells that the tower tableau $T$ has the same shape as the
natural tower tableau $\mathbb T$. Thus, the equality
$\mathrm{shape}(R_{T}) = \D_\omega$ holds, as required.
\end{proof}
The following corollary is immediate from the above proof.
\begin{cor}\label{Corollary:corner-border}
Let $\omega$ be a permutation with its tower diagram $\mathcal T$ and its Rothe diagram
$\D$. Then there is a bijective correspondence
between the set $C_{\mathcal T}$ of corner cells of $\mathcal T$ and
the set $B_\D$ of border cells of $\D$, given by reading the column
indexes.
\end{cor}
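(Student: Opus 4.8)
The plan is to extract the claimed bijection directly from the inductive structure of the proof of the preceding theorem, since that proof already tracks exactly how the "last" corner cell of the tower diagram corresponds to the "last" border cell of the Rothe diagram. First I would recall the setup: given $\omega$ with reading word $\alpha = \alpha_1\cdots\alpha_l$ of its natural tableau $\mathbb T$, the corner cell of $\mathcal T$ carrying the largest label $l$ corresponds, under the flight algorithm, to the last letter $\alpha_l$, and hence to the right descent $i=\alpha_l$ of $\omega$; on the Rothe side, the proof above shows that the cell added to $\D_{\tilde\omega}$ to form $\D_\omega$ is precisely the border cell $(j,\omega(j+1))$ with $j=\alpha_l$, i.e. the border cell associated to the same descent. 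So the last corner of $\mathcal T$ and the last border cell of $\D$ sit in rows determined by the same index, and the theorem's proof identifies their column indices as equal.

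The induction then runs as follows. For $l=1$ both $C_{\mathcal T}$ and $B_{\D}$ are singletons and the statement is trivial. For $l>1$, peel off the corner cell $c$ of $\mathcal T$ with maximal label (equivalently, remove the letter $\alpha_l$), passing from $\omega$ to $\tilde\omega=\omega s_{\alpha_l}$ with $l(\tilde\omega)=l-1$; by Lemma~\ref{Lemma:key} and Proposition~6.1 of \cite{CT} this corresponds on the Rothe side to deleting the border cell $b=(j,\omega(j+1))$ and swapping rows $j,j+1$. By the induction hypothesis there is a column-index bijection $C_{\tilde{\mathcal T}}\leftrightarrow B_{\tilde\D}$. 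One extends it to $C_{\mathcal T}\leftrightarrow B_{\D}$ by sending $c\mapsto b$ (whose column indices agree, by the final paragraph of the theorem's proof) and matching the remaining corners/border cells via the inductive bijection. The content to check is that this extension is well defined: removing $c$ from $\mathcal T$ really does leave $\tilde{\mathcal T}$ with corner set $C_{\tilde{\mathcal T}}$, the other corners of $\mathcal T$ besides $c$ are in natural bijection with the corners of $\tilde{\mathcal T}$ under column index (this needs the observation that the tower-switching step of Lemma~\ref{Lemma:key} permutes at most two adjacent columns but does not change which columns host corners, except for the departed $c$), and symmetrically on the Rothe side row-swapping does not disturb the column positions of the surviving border cells.

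The main obstacle I expect is bookkeeping of the column indices under the two "switch" operations — switching the adjacent towers $T_i,T_{i+1}$ on the tower side, and switching rows $j,j+1$ on the Rothe side. One must argue that a corner of $\mathcal T$ distinct from $c$ occupies the same column before and after removal of $c$, and that its Rothe counterpart (a border cell corresponding to some other descent $i'\neq\alpha_l$ of $\omega$) occupies a column that is unaffected, or is affected in exactly the matching way, by the row swap. Since border cells are the cells $(i',\omega(i'+1))$ for $i'\in\Des(\omega)$ and the relevant row swap only exchanges rows $j,j+1$ while fixing all column contents setwise, the column index of each surviving border cell is preserved; on the tower side the analogous stability of corner-hosting columns under the adjacent-tower switch is visible from the description in Lemma~\ref{Lemma:key}. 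Once this stability is in hand, the induction closes and the correspondence is, by construction, the one described in the statement — matching corner cells of $\mathcal T$ with border cells of $\D$ by reading column indices.
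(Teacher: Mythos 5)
Your first paragraph is essentially the paper's argument, and it already suffices: for each right descent $j$ of $\omega$ one chooses a reduced word ending in $j$, and the last paragraph of the preceding theorem's proof shows that the corner cell removed from $\mathcal T$ and the border cell $(j,\omega(j+1))$ added to $\D_{\tilde\omega}$ have the same column index. Since the corners of $\mathcal T$ and the border cells of $\D$ are each indexed by $\Des(\omega)$ (corners via their flight numbers, border cells by definition), running over all descents gives the bijection with no further induction over the corner set; this is what the paper means by ``immediate from the above proof.''

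The inductive scheme in your second and third paragraphs, however, has a genuine gap. You need that $C_{\mathcal T}\setminus\{c\}$ matches $C_{\tilde{\mathcal T}}$ and that $B_{\D}\setminus\{b\}$ matches $B_{\tilde\D}$ column by column, and you justify this by saying the tower switch and the row swap do not move the surviving cells. But being a corner or a border cell is not a property preserved by the reduction: $\Des(\omega s_{\alpha_l})$ is not $\Des(\omega)\setminus\{\alpha_l\}$ in general, so border cells appear and disappear rather than merely being deleted. For $\omega=231$ one has $\Des(\omega)=\{2\}$ and $\tilde\omega=213$ with $\Des(\tilde\omega)=\{1\}$, so $B_{\D}\setminus\{b\}$ is empty while $B_{\tilde\D}$ is a singleton; the two sets you want to biject do not even have the same cardinality. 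Even when a descent $i'$ survives, its border cell moves: for $\omega=1432$ and $j=3$ the border cell of the surviving descent $2$ is $(2,\omega_3)=(2,3)$ in $\D_{1432}$ but $(2,\tilde\omega_3)=(2,2)$ in $\D_{1423}$, so the column index of the ``surviving'' border cell is not preserved, even though the row swap fixes row $2$ pointwise. The same phenomenon occurs for corners on the tower side. The induction therefore does not close; the descent-by-descent argument sketched in your first paragraph is the one to carry out.
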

Now we are ready to prove one of the main results of the paper. Notice that the labeling of the Rothe diagram in
the previous example is balanced. Our result shows that this is not a coincidence.
\begin{thm}\label{Thm:Rothification}
Let $\omega$ be a permutation and $\alpha=\alpha_1\alpha_2\ldots \alpha_l$ be a reduced word
representing $\omega$. Let $T_\alpha$ be the standard tower tableau
corresponding to $\alpha$ and let $D_\alpha$ be the canonical
labeling of the Rothe diagram $\D_\omega$ of $\omega$ corresponding
to $\alpha$. Then the tableau $D_\alpha$ is the Rothification of the
tower tableau $T_\alpha$, that is, $R_{T_\alpha }=D_\alpha$.
\end{thm}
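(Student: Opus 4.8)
The plan is to prove the identity by induction on the length $l=l(\omega)$, refining the proof of the previous theorem so as to track labels and not merely shapes. The base case $l=1$ is immediate: $\omega$ is then a simple transposition, $T_\alpha$ and $D_\alpha$ each consist of one cell, and that cell receives the label $1$ under both constructions, so $R_{T_\alpha}=D_\alpha$.

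For $l>1$, set $\tilde\alpha=\alpha_1\alpha_2\ldots\alpha_{l-1}$, a reduced word for $\tilde\omega=\omega s_{\alpha_l}$ with $l(\tilde\omega)=l-1$, and write $j=\alpha_l$. The inductive hypothesis gives $R_{T_{\tilde\alpha}}=D_{\tilde\alpha}$, where $T_{\tilde\alpha}=(T_\alpha)_{\le l-1}$ is obtained from $T_\alpha$ by deleting its corner cell of label $l$. The first step is to extract what the previous theorem and its proof already tell us on the tower side. That proof applies Lemma \ref{Lemma:key} to the virtual tableau $T_\alpha^-$ — whose cell of label $1$, created by the letter $\alpha_l$, lies in row $j$ — and shows that passing from $\Upsilon_\alpha=(T_\alpha,T_\alpha^-)$ to $\tilde\Upsilon=(T_{\tilde\alpha},T_{\tilde\alpha}^-)$ deletes from $R_{T_\alpha}$ the cell in row $j$ and column $i^\ast$, where $i^\ast$ is the column of the label-$l$ cell of $T_\alpha$, and then interchanges rows $j$ and $j+1$; moreover it identifies $i^\ast=\omega_{j+1}$ (equivalently, via Corollary \ref{Corollary:corner-border}, the column of this corner matches that of the border cell of $\D_\omega$ attached to the descent $j$). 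Since $\mathrm{shape}(R_{T_\alpha})=\D_\omega=\mathrm{shape}(D_\alpha)$ by that theorem, it remains only to match labels; and the key observation here is that the label $\mathrm{label}(u)$ attached to a surviving pair $(u,v)$ in the Rothification does not change, because in $\tilde\Upsilon$ the defining condition becomes $\mathrm{label}(u)+\mathrm{label}(v)=l$ while the last step of Lemma \ref{Lemma:key} has lowered every virtual label by one, so $u$ is still paired with the same cell. Thus $R_{T_{\tilde\alpha}}$ is exactly $R_{T_\alpha}$ with the cell $(j,\omega_{j+1})$ of label $l$ deleted and rows $j,j+1$ interchanged.

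The next step is to prove the parallel statement for the canonical labeling: $D_{\tilde\alpha}$ is obtained from $D_\alpha$ by deleting the cell of label $l$ and interchanging rows $j$ and $j+1$, and the label-$l$ cell of $D_\alpha$ is $(j,\omega_{j+1})$. This is read off from the definition of $D_\alpha$. For each $r\le l-1$ the $r$-th letters of $\alpha$ and $\tilde\alpha$ agree, so the $r$-th step transposes the same pair of values $a<b$; the cell receiving label $r$ is the pair (position of the larger value $b$ in the final permutation, smaller value $a$). Since $\omega=\tilde\omega s_j$ and hence $\omega^{-1}=s_j\tilde\omega^{-1}$, the position of $b$ in $\omega$ is the position of $b$ in $\tilde\omega$ with $j$ and $j+1$ interchanged, so the label-$r$ cell of $\D_\omega$ is the label-$r$ cell of $\D_{\tilde\omega}$ with rows $j,j+1$ swapped. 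Applying the same reasoning to the final step, whose letter is $s_j$ and which transposes the values $\omega_{j+1}<\omega_j$, places label $l$ on $(j,\omega_{j+1})$ — consistent with Proposition 6.1 of \cite{CT}, which builds $\D_\omega$ from $\D_{\tilde\omega}$ by the very same row interchange together with the cell $(j,\omega_{j+1})$.

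Finally one combines the two reductions. By the previous two steps $R_{T_\alpha}$ and $D_\alpha$ are labelings of the same diagram $\D_\omega$, the same cell $(j,\omega_{j+1})$ carries label $l$ in each, and deleting that cell and then interchanging rows $j$ and $j+1$ sends $R_{T_\alpha}$ to $R_{T_{\tilde\alpha}}$ and $D_\alpha$ to $D_{\tilde\alpha}$; these coincide by the inductive hypothesis, and since that operation is invertible, $R_{T_\alpha}=D_\alpha$. I expect the genuine obstacle to be the first step: squeezing out of the proof of the previous theorem — together with Lemma \ref{Lemma:key} and the third-quadrant geometry of $T_\alpha^-$ — the statement at the level of labeled rather than shaped diagrams, in particular checking that the ``push down and switch towers'' move on $T_\alpha^-$ corresponds under Rothification to exactly the interchange of rows $j$ and $j+1$ with every surviving label preserved, and that the cell thereby discarded from $R_{T_\alpha}$ is precisely the one bearing label $l$.
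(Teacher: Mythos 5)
Your proof is correct, but it matches the labels by a different mechanism than the paper. You share the paper's skeleton: induction on $l$, peeling off the last letter $\alpha_l=j$, and using Lemma \ref{Lemma:key} together with the shape theorem to see that passing to $\tilde\alpha$ deletes the cell $(j,\omega_{j+1})$ and swaps rows $j$ and $j{+}1$. But from there you argue \emph{directly}: you check that the label-decrement in Lemma \ref{Lemma:key} cancels against the shift of the pairing condition from $\mathrm{label}(u)+\mathrm{label}(v)=l+1$ to $=l$, so surviving Rothification labels are preserved, and you verify from the bare definition of $D_\alpha$ (the transposed pair of values at step $r\le l-1$ depends only on the prefix, while $\omega^{-1}=s_j\tilde\omega^{-1}$ swaps the positions $j,j+1$) that $D_\alpha$ satisfies the identical label-preserving recursion; equality then follows since the reduction is invertible. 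The paper instead routes through the Fomin--Greene--Reiner--Shimozono machinery: it first proves $R_{T_\alpha}$ is a balanced labeling (inductively, via their border-cell deletion theorem), and then shows that their inverse map $\mathrm{IBL}(\omega)\to\mathrm{Red}(\omega)$ sends $R_{T_\alpha}$ back to $\alpha$, concluding $R_{T_\alpha}=D_\alpha$ from the bijectivity of $\alpha\mapsto D_\alpha$. The trade-off: the paper's route yields the balancedness of $R_{T_\alpha}$ as an explicit intermediate step (which is reused later for the semi-standard generalization), whereas your route is more self-contained, using only the definition of the canonical labeling and none of the FGRS bijection theorems, and obtains balancedness of $R_{T_\alpha}$ only a posteriori from $R_{T_\alpha}=D_\alpha$. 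You also correctly identify the one place needing care, namely upgrading the shape-level statement about the virtual tableau to a labeled statement, and you supply exactly the needed observation.
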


\begin{proof}
We first prove that the Rothification $R_{T_\alpha}$ is balanced. We
argue by induction on the length $l$ of the permutation $\omega$. If
the length is $1$, then the claim is trivial. So assume that $l>1$
and that the claim is true for all permutations of length $l-1$.
Then by Corollary \ref{Corollary:corner-border}, the cell, say $b$,
with label $l$ in $R_{T_\alpha}$ is a border cell. Therefore, by
\cite[Theorem 4.8]{FGRS}, the tableau $R_{T_\alpha}$ is balanced if
and only if the tableau $R_{T_\alpha}\backslash b$ is balanced,
where $R_{T_\alpha}\backslash b$ denotes the deletion of the border
cell $b$ according to \cite[Lemma 4.6]{FGRS}. Moreover, in this
case, the resulting diagram is the diagram of $\omega s$ for some transposition $s$. But by the
above theorem, the resulting tableau is the Rothification of the
tower tableau obtained by removing the corresponding corner cell,
say $c$, in $T_\alpha$. Hence by the induction hypothesis, the
diagram $R_{T_\alpha} \backslash b$ is balanced, and hence
$R_{T_\alpha}$ is balanced, as required.

Now it is sufficient to show that the word given by \cite[Theorem
5.2]{FGRS} is equal to $\alpha= \alpha_1\alpha_2\ldots \alpha_l$. We
again argue by induction on the length of $\omega$ and assume the
result for all permutations of length less than $l$. Then removing
the border cell $b$ with label $l$ from $R_{T_\alpha}$, the
remaining diagram gives the word $ \alpha_1\alpha_2
\ldots\alpha_{l-1}$, by the induction hypothesis. Thus it remains to
show that the transposition $s$ corresponding to the cell $b$ is
$\alpha_l$. By \cite[Theorem 5.2]{FGRS}, $s$ is equal to the
transposition $(i,i+1)$ where $i$ is the row index of the cell $b$.
But the row index of $b$ is equal to $\alpha_l$ since it is the
first letter slid in the virtual sliding of $\alpha$.
\end{proof}
In the reverse direction, it is possible to determine the standard
tower tableau $T_\alpha$ starting with  $D_\alpha$. Indeed,  to
obtain the tableau $T_\alpha$, we push the labels of $D_\alpha$ up
to the tower diagram of $\omega$ and then rearrange the entries within each column so
that the labels are increasing on each column from bottom to top.

To prove this observation, let $b$ be the cell in $D_\alpha$ with
label $l$. Then it is a border cell by \cite{FGRS} and hence the
corresponding cell, say $c$, in  $T_\alpha$ is a corner cell.
Moreover, removing the cell $b$ from  $D_\alpha$ corresponds to the
removal of the cell $c$ from $T_\alpha$ together with the
corresponding cell from the virtual tableau $T^-_\alpha$. Thus the
result follows by induction on the length as in the previous case,
and hence we get the following theorem.
\begin{thm}\label{thm:pushUp}
Assume the notations of the previous theorem. Then the tower tableau
$T_\alpha$ can be obtained from the canonical tableau $D_\alpha$ of
the Rothe diagram by pushing all the labels  up to the corresponding
columns and rearranging them in increasing order from bottom to
top.
\end{thm}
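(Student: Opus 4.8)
The plan is to read the statement off the explicit description of the Rothification, using Theorem~\ref{Thm:Rothification} --- which already gives $D_\alpha=R_{T_\alpha}$ --- together with one extra observation about standard tableaux. Unwinding the definition of $R_T$ for $T=T_\alpha$: the set $I$ consists of the pairs $(u,v)\in T_\alpha\times T^-_\alpha$ with $\mathrm{label}(u)+\mathrm{label}(v)=l+1$. Since $T_\alpha$ and $T^-_\alpha$ are standard tableaux with $l$ cells, their label sets are $\{1,\ldots,l\}$ with no repetition, so for each $u\in T_\alpha$ there is exactly one $v$ with $\mathrm{label}(v)=l+1-\mathrm{label}(u)$, and $u\mapsto v$ is a bijection $T_\alpha\to T^-_\alpha$. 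Hence $I$ is in bijection with $T_\alpha$, and the cell of $R_{T_\alpha}$ arising from $u=(u_1,u_2)$ is $[(u_1,-v_2),\mathrm{label}(u)]$: it lies in the column of index $u_1$, which is the index of the tower of $T_\alpha$ containing $u$, and it carries the same label as $u$. Together with $D_\alpha=R_{T_\alpha}$, this produces a label-preserving bijection between the cells of $D_\alpha$ and those of $T_\alpha$ sending the column index of a cell of $D_\alpha$ to the tower index of the matching cell of $T_\alpha$; in particular, for every $k$, column $k$ of $D_\alpha$ and tower $k$ of $T_\alpha$ carry the same multiset of labels and have the same number of cells.

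It remains to compare the vertical orders. In a standard tower tableau the labels strictly increase from bottom to top along each tower: by the flight characterization of standardness, when the cell of largest label in a given tower is deleted it is required to be a corner of the current subtableau, hence the top cell of its tower; but the whole tower is still present at that moment, so that cell sits at the very top. Iterating on the shorter tower shows that the labels of the tower descend from top to bottom. Therefore the labels of tower $k$ of $T_\alpha$, read from bottom to top, are exactly the labels of column $k$ of $D_\alpha$ sorted in increasing order. Since the two columns also agree in size, this says precisely that distributing the labels of $D_\alpha$ column by column onto the tower diagram of $\omega$, filling each tower from the bottom and then sorting it into increasing order, reconstructs $T_\alpha$ --- which is the assertion of the theorem.

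An equivalent argument, the one sketched just before the statement, is induction on $l=l(\omega)$: the cell $b$ of $D_\alpha$ of label $l$ is a border cell by \cite{FGRS}, so its partner $c$ in $T_\alpha$ is a corner cell in the same column by Corollary~\ref{Corollary:corner-border}, and deleting $b$ from $D_\alpha$ corresponds, through Theorem~\ref{Thm:Rothification} and Lemma~\ref{Lemma:key} applied on the virtual side, to deleting $c$ from $T_\alpha$; one then applies the inductive hypothesis to $D_\alpha\backslash b$ and restores $c$ on top of its tower. In either route the single delicate point is the claim that the column index of a Rothe cell produced by the Rothification equals the tower index of the tower cell it came from; this is precisely what the first coordinate $u_1$ in the formula $R_T=\{[(u_1,-v_2),\mathrm{label}(u)]\}$ encodes, and it deserves an explicit remark, since the $(\text{row},\text{column})$ convention for Rothe diagrams and the tower-indexing convention do not literally coincide. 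Once this is settled, everything else is routine bookkeeping.
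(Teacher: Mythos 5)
Your proposal is correct, and its primary route is genuinely different from the paper's. The paper proves the statement by induction on the length $l$ of $\omega$: the cell $b$ of $D_\alpha$ with label $l$ is a border cell, the corresponding cell $c$ of $T_\alpha$ is a corner cell, removing $b$ from $D_\alpha$ corresponds to removing $c$ from $T_\alpha$ together with its partner in the virtual tableau $T^-_\alpha$, and the inductive hypothesis finishes --- this is exactly the ``equivalent argument'' you sketch in your last paragraph. Your main argument instead reads the theorem directly off the definition of $R_T$ combined with Theorem \ref{Thm:Rothification}: since $T_\alpha$ and $T^-_\alpha$ are injectively labelled by $\{1,\dots,l\}$, the index set $I$ is in label- and column-preserving bijection with the cells of $T_\alpha$, so column $k$ of $D_\alpha$ and tower $k$ of $T_\alpha$ carry the same multiset of labels; and the flight characterization of standardness forces the labels in each tower of a standard tableau to increase strictly from bottom to top, so sorting each column reconstructs $T_\alpha$. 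This is a shorter, non-inductive derivation, and it has the virtue of isolating the one convention-sensitive point --- that the first coordinate $u_1$ in $R_T=\{[(u_1,-v_2),\mathrm{label}(u)]\}$ is simultaneously the column index of the Rothe cell and the tower index of its source --- which the paper leaves implicit. What the paper's induction buys is uniformity: the same remove-a-border-cell scheme is reused essentially verbatim for the semi-standard/balanced generalization leading to Theorem \ref{thm:semistandard-balanced}, where the injectivity of the labels, on which your direct set-up of the bijection $I\leftrightarrow T_\alpha$ relies, is no longer available and must be routed through the standardization.
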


Via this result, we have established a bijective correspondence between injective balanced labelings of a Rothe
diagram and standard tower tableaux of the corresponding shape. Indeed, by definition, any standard tower
tableau is column strict and the corresponding labeling of the Rothe diagram coincides with the given labeling of
the associated tower diagram at each column, up to a permutation of the entries of the column. This result
can further be generalized to a bijective correspondence between semi-standard tower tableaux and all
balanced labelings in the following way.

Let $T$ be a semi-standard tower tableau  and let $\alpha$ be the
reading word of the standard tableau $S(T)$. Also let $S(\Upsilon) =
(S(T), S(T)^{-})$ be the complete tower tableau of $S(T)$. Then we
define the completion of the semi-standard tableau $T$ as the double
labelled complete tower tableau $\Upsilon = (T,T^-)$  with the same
shape as $S(\Upsilon)$ where the double labeling $$f: \Upsilon
\rightarrow \mathbb N\times \mathbb N, (i,j)\mapsto
(a_{(i,j)},b_{(i,j)})$$ is given by the following rule. If $(i,j)
\in T$, then $a_{(i,j)}$ is the label of $(i,j)$ in $T$ and
$b_{(i,j)}$ is the label of it in $S(T)$. On the other hand, if
$(i,j)\in T^-$, then $a_{(i,j)}$ is the label of $(i,j)$ in $T$ and
$b_{(i,j)}$ is the label of it in $S(T)^-$.

For example, if $T$ is the semi-standard tableau

$$
\multiput(-100,0)(0,0){1}{\line(1,0){100}}
\multiput(-100,0)(0,0){1}{\line(0,1){40}}
\put(-90,0){\tableau{{\mbox{{8}}}}}
\put(-80,20){\tableau{{\mbox{{9}}}\\{\mbox{{8}}}\\{\mbox{{7}}}}}
\put(-70,10){\tableau{{\mbox{{9}}}\\{\mbox{{3}}}}}
\put(-60,0){\tableau{{\mbox{{2}}}}}
\put(-40,10){\tableau{{\mbox{{4}}}\\{\mbox{{3}}}}}
$$

then the corresponding complete tower tableau is given as follows.

$$\put(-100,0){\line(0,-1){90}} \put(-100,0){\line(-1,0){40}}
\multiput(-100,0)(0,0){1}{\line(1,0){100}}
\multiput(-100,0)(0,0){1}{\line(0,1){40}}
\put(-90,0){\tableau{{\mbox{\tiny{8,7}}}}}
\put(-80,20){\tableau{{\mbox{\tiny{9,8}}}\\{\mbox{\tiny{8,6}}}\\{\mbox{\tiny{7,5}}}}}
\put(-70,10){\tableau{{\mbox{\tiny{9,9}}}\\{\mbox{\tiny{3,3}}}}}
\put(-60,0){\tableau{{\mbox{\tiny{2,1}}}}}
\put(-40,10){\tableau{{\mbox{\tiny{4,4}}}\\{\mbox{\tiny{3,2}}}}}
\put(-110,-10){\tableau{\\{\mbox{\tiny{8,3}}}}}
\put(-120,-10){\tableau{\\{\mbox{\tiny{7,5}}}}}
\put(-130,-10){\tableau{\\{\mbox{\tiny{3,7}}}}}
\put(-140,-10){\tableau{\\{\mbox{\tiny{2,9}}}}}
\put(-110,-30){\tableau{\\{\mbox{\tiny{9,1}}}}}
\put(-120,-30){\tableau{\\{\mbox{\tiny{9,2}}}}}
 \put(-110,-40){\tableau{\\{\mbox{\tiny{8,4}}}}}
\put(-110,-60){\tableau{\\{\mbox{\tiny{3,8}}}}}
\put(-110,-70){\tableau{\\{\mbox{\tiny{4,6}}}}}
$$

The Rothification, in this setting, is again done according to the
standardization. As in the above case, the place of the cell is
determined by the Rothification applied to the standardization,
(hence by the second coordinates of the labels) and the labels are
taken from the semi-standard tableau, (hence are the first
coordinates of the labels). We illustrate the final part of the
algorithm on the same example.
$$
\put(-100,0){\line(0,-1){90}}
\put(-100,0){\line(-1,0){40}}
\multiput(-100,0)(0,0){1}{\line(1,0){100}}
\multiput(-100,0)(0,0){1}{\line(0,1){40}}
\put(-90,0){\tableau{{\mbox{\tiny{8,7}}}}}
\put(-80,20){\tableau{{\mbox{\tiny{9,8}}}\\{\mbox{\tiny{8,6}}}\\{\mbox{\tiny{7,5}}}}}
\put(-70,10){\tableau{{\mbox{\tiny{9,9}}}\\{\mbox{\tiny{3,3}}}}}
\put(-60,0){\tableau{{\mbox{\tiny{2,1}}}}}
\put(-40,10){\tableau{{\mbox{\tiny{4,4}}}\\{\mbox{\tiny{3,2}}}}}
\put(-110,-10){\tableau{\\{\mbox{\tiny{8,3}}}}}
\put(-120,-10){\tableau{\\{\mbox{\tiny{7,5}}}}}
\put(-130,-10){\tableau{\\{\mbox{\tiny{3,7}}}}}
\put(-140,-10){\tableau{\\{\mbox{\tiny{2,9}}}}}
\put(-110,-30){\tableau{\\{\mbox{\tiny{9,1}}}}}
\put(-120,-30){\tableau{\\{\mbox{\tiny{9,2}}}}}
 \put(-110,-40){\tableau{\\{\mbox{\tiny{8,4}}}}}
\put(-110,-60){\tableau{\\{\mbox{\tiny{3,8}}}}}
\put(-110,-70){\tableau{\\{\mbox{\tiny{4,6}}}}}
\put(-90,-10){\tableau{\\{8}}}
\put(-80,-10){\tableau{\\{7}}}
\put(-70,-10){\tableau{\\{3}}}
\put(-60,-10){\tableau{\\{2}}}
\put(-80,-30){\tableau{\\{9}\\{8}}}
\put(-70,-30){\tableau{\\{9}}}
\put(-40,-60){\tableau{\\{3}\\{4}}}
$$
Now we claim that the Rothification $R_T$ of a semi-standard tower
tableau $T$ is balanced. To prove this, we argue by induction on the
number of cells in $T$. Let $c$ be the corner cell in $T$ with
maximal label and minimum flight number. Then by Corollary
\ref{Corollary:corner-border}, the corresponding cell $b$ in $R_T$
is a border cell. Then by the induction hypothesis, the tableau
$R_T\backslash b$ is balanced and hence by Theorem 4.8 in
\cite{FGRS}, the tableau $R_T$ is balanced, as required. As in the
case of standard tableaux, the reverse of the above claim is also
true, that is, pushing the labels of a balanced tableau up to the
tower diagram, we will obtain a semi-standard tower tableau. The
proof of this last claim is very similar to the proof Theorem
\ref{thm:pushUp} and is left to the reader. Hence we have obtained
the following theorem.
\begin{thm}\label{thm:semistandard-balanced}
Let $\omega$ be a permutation, $\mathcal T$ be its tower diagram and
$\D$ be its Rothe diagram. Then there is a bijective correspondence
between
\begin{enumerate}
\item the set \mbox{\rm SSTT}$(\mathcal T)$ of all semi-standard tower tableaux of shape $\mathcal T$ and
\item the set \mbox{\rm BL}$(\D)$ of all balanced labelings of $\D$
\end{enumerate}
given by sending a balanced labelling to the tower tableau obtained by pushing the labels up to the diagram
$\mathcal T$ and rearranging them in non-decreasing order on columns. The inverse bijection is given by the
Rothification algorithm described above.
\end{thm}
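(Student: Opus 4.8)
The plan is to show that the two maps in the statement --- Rothification $R\colon\mathrm{SSTT}(\mathcal T)\to\mathrm{BL}(\D)$ and the push-up map $\Phi\colon\mathrm{BL}(\D)\to\mathrm{SSTT}(\mathcal T)$ (push the labels of a balanced labeling up into the columns of $\mathcal T$, then rearrange each column non-decreasingly from bottom to top) --- are well defined and mutually inverse. Well-definedness of $R$ is precisely what is verified in the paragraph preceding the statement: by induction on $|T|$, with $c$ the corner cell of $T$ of maximal label and minimal flight number, Corollary \ref{Corollary:corner-border} matches $c$ with a border cell $b$ of $R_T$, the theorem establishing $\mathrm{shape}(R_T)=\D_\omega$ supplies the shape, and \cite[Theorem 4.8]{FGRS} reduces balancedness of $R_T$ to that of $R_T\setminus b$, which is the Rothification of the semi-standard tableau $c\nwarrow T_{\le n}$ and hence balanced by the inductive hypothesis. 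I would also record along the way, column by column, that the multiset of labels of $R_T$ in column $i$ equals the multiset of $T$-labels in column $i$ of $\mathcal T$, since Rothification preserves column indices.

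For well-definedness of $\Phi$ I would run the same induction on $|D|$ in reverse. Given $D\in\mathrm{BL}(\D)$, choose a border cell $b$ of $\D$; by \cite[Lemma 4.6]{FGRS} its deletion $D\setminus b$ is a labeling of the Rothe diagram of $\omega s$ for the adjacent transposition $s$ determined by the row of $b$, and by \cite[Theorem 4.8]{FGRS} it is again balanced. By Corollary \ref{Corollary:corner-border} the column of $b$ is the column of a corner cell $c$ of $\mathcal T$, and deleting $b$ removes from $D$ exactly one cell in that column. Since $\Phi$ and border-cell deletion both act column-wise and preserve column indices, one gets $\Phi(D\setminus b)=c\nwarrow\Phi(D)$ after the evident relabeling, so $\Phi(D)$ is semi-standard by the recursive definition --- provided $c$ realizes the corner of $\Phi(D)$ of maximal label and minimal flight number. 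This one arranges by taking $b$ to be the border cell distinguished by the FGRS standardization of $D$. This is the step patterned on the proof of Theorem \ref{thm:pushUp}.

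It then remains to check the two composites. The equality $\Phi\circ R=\mathrm{id}$ is the semi-standard extension of Theorem \ref{thm:pushUp}: $R$ sends the $T$-label of a tower cell in column $i$ to a cell of $\D$ in column $i$, and $\Phi$ recollects, column by column, exactly these labels; since a semi-standard tower tableau is forced to be non-decreasing from bottom to top within each column, the rearrangement is uniquely determined and nothing is lost, so $\Phi(R_T)=T$. The equality $R\circ\Phi=\mathrm{id}$ follows by the same induction: removing the distinguished border cell from $D$, applying $R\circ\Phi$, and reinserting the cell commutes with the recursive step, using the inductive hypothesis together with Theorem \ref{Thm:Rothification} applied to the standardized data.

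I expect the main obstacle to be the bookkeeping in the non-injective setting, where several cells may carry the same label: one has to show that the distinguished corner of $\Phi(D)$ (``maximal label, minimal flight number'') corresponds, under Corollary \ref{Corollary:corner-border}, to the border cell of $D$ selected by the FGRS standardization, and symmetrically that the label-$n$ border cell produced from $R_T$ is that same distinguished cell. Concretely this says that the standardization $S(\,\cdot\,)$ of tower tableaux and the FGRS standardization of balanced labelings intertwine Rothification, which by Theorem \ref{Thm:Rothification} reduces to the injective case already in hand. Once this compatibility is established, both inductions run verbatim and the theorem follows.
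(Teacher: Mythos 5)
Your proposal is correct and follows essentially the same route as the paper: an induction that peels off the corner cell of maximal label and minimal flight number, matches it with a border cell via Corollary \ref{Corollary:corner-border}, and invokes Theorem 4.8 of Fomin--Greene--Reiner--Shimozono, with the push-up direction patterned on Theorem \ref{thm:pushUp}. You are in fact more explicit than the paper, which leaves that reverse direction and the verification that the two maps are mutually inverse to the reader.
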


\begin{rem}
The significance of the above result is that by passing from the Rothe diagram to the tower diagram, we replace
the condition that the labeling is balanced by the condition that the labeling is semi-standard. We find it easier to
check whether a labeling is semi-standard than to check if a labeling is balanced. The reason for this is that while
checking the condition on the tower diagram, the tower diagram is getting smaller and smaller at each step,
while this is not the case for the process on the Rothe diagram. Using this observation, we will determine the
type of labeling that corresponds to Schubert polynomials and Stanley symmetric functions, in the next
section.
\end{rem}

We finish this section with the following result on hooks on the tower diagrams. The result will give a rationale
for the above phenomenon.

\begin{defn} Let $c=(i,j)$ and $c'=(i',j')$ be two  cells in a tower diagram  $\mathcal T=(\mathcal T_1,\mathcal
T_2,\ldots)$ lying in the towers $\mathcal T_i$ and $\mathcal
T_{i'}$ respectively. Suppose that $c$ lies on the diagonal
$x+y=d$ i.e., $i+j=d$.
\begin{enumerate}
\item We say that the cell $c'$ is \textbf{adjacent to the cell $c$ from the right} if
$i'>i$, and the cell $c'$ lies on the diagonal $x+y=d+1$, and there is no
tower between $\mathcal T_i$ and $\mathcal T_{i'}$ having a
cell on the diagonal $x+y=d$. In this case we define
$$\mathrm{East}(c,\mathcal T)=\{c\}\cup \mathrm{East}(c',\mathcal
T).$$
\item We say that the cell $c'$ is \textbf{adjacent to the cell $c$ from above} if $i'=i$ and
$j'=j+1$. In this case we define
$$\mathrm{North}(c,\mathcal
T)=\{c\}\cup \mathrm{North}(c',\mathcal T).$$
\end{enumerate}
If there is no  cell in $\mathcal T$ which is adjacent to $c$ from
the right (respectively from above) then we define
$$\mathrm{East}(c,\mathcal T)=\{c\}~~ (respectively
~~\mathrm{North}(c,\mathcal T)=\{c\}).$$
\end{defn}

\begin{defn}
Let $\mathcal T$ be a tower diagram and $c=(i,j)$ be a cell in
$\mathcal T$. Then we define the \textbf{hook} $H_c$ of $\mathcal T$
with vertex $c$ as the set of all cells in $\mathcal T$ which are adjacent to $c$ from the right or above, that is,
$$H_c= \mathrm{North}(c,\mathcal T)\cup \mathrm{East}(c,\mathcal T).
$$
\end{defn}

As an illustration of a hook, we provide the following tower diagram
where  the cells other than $c$ in $H_c$ are labeled by a bullet
$\bullet$.
\begin{center}
\begin{picture}(100,120)
\put(0,100){\tableau{ {\bullet}\\ {\bullet}\\ {\bullet}\\ {\bullet}\\ {c}\\ {}\\ {}\\ {}\\ {}\\ {}\\ {}\\ {}}}
\put(10,60){\tableau{{\bullet}\\{}\\{}\\{}\\{}\\{}\\{}\\{}}}
\put(20,60){\tableau{{\bullet}\\{}\\{}\\{}\\{}\\{}\\{}\\{}}}
\put(40,20){\tableau{{}\\{}\\{}\\{}}}
\put(60,30){\tableau{{\bullet}\\{}\\{}\\{}\\{}}}
\put(70,70){\tableau{{}\\{}\\{}\\{}\\{\bullet}\\{}\\{}\\{}\\{}}}
\put(100,20){\tableau{{}\\{\bullet}\\{}\\ {}}}
\put(110,10){\tableau{\\{}\\ {}}}
\put(130,20){\tableau{{}\\{}\\{}\\ {}}}
\multiput(35,65)(0,0){1}{\line(1,-1){25}}
\multiput(85,35)(0,0){1}{\line(1,-1){15}}
\multiput(115,15)(0,0){1}{\line(1,-1){10}}
\end{picture}
\end{center}
\vspace{.15in}

The next result shows that our definition of a hook and the one from \cite{FGRS} are compatible. Also, as
explained above, it shows why we can replace the condition of being balanced by being semi-standard.
\begin{pro}\label{Pro:hooks}
Let $\omega$ be a permutation with tower diagram $\mathcal T$ and Rothe diagram $D$. Let
$\eta$ be its natural word with standard tower tableau $\mathbb T$ and canonical labelling $\mathbb D$ of 
$D$. Let $c$ be a cell in $\mathbb T$ and let $c''$ be the corresponding cell in $\mathbb D$.
Then, under the Rothification,
the hook $H_c$ in $\mathcal T$ coincides with the hook $H_{c''}$ in $D$.
\end{pro}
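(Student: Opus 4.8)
I read the statement as the assertion that, under the cell-correspondence $\Phi\colon\mathcal T\to\D$ induced by the Rothification (match each label of $\mathbb T$ with the equal label of $\mathbb D$; in particular $\Phi(c)=c''$), one has $\Phi(H_c)=H_{c''}$. The plan is to split each hook into its two legs and match them up. Write $H_{c''}=V_{c''}\cup W_{c''}$, where $V_{c''}$ is the set of cells of $\D$ in the column of $c''$ weakly below it and $W_{c''}$ the set of cells in the row of $c''$ weakly to its right, so $V_{c''}\cap W_{c''}=\{c''\}$. I would prove separately that $\Phi(\mathrm{North}(c,\mathcal T))=V_{c''}$ and $\Phi(\mathrm{East}(c,\mathcal T))=W_{c''}$; since $\mathrm{North}(c,\mathcal T)\cap\mathrm{East}(c,\mathcal T)=\{c\}$ and their union is $H_c$, this gives $\Phi(H_c)=H_{c''}$, which is the claim.

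The $\mathrm{North}$ half is in effect a reformulation of Theorem \ref{thm:pushUp}: by that theorem the labels of any one column of $\D$ are precisely the labels of one tower of $\mathcal T$, rearranged to increase from the bottom up, so $\Phi$ restricts to a bijection of each tower $\mathcal T_i$ with one column of $\D$. The one further point is that this bijection reverses the vertical order, the top cell of $\mathcal T_i$ going to the bottom cell of the column; I would deduce this from the fact that the top cell of $\mathcal T_i$ is a corner, hence maps by Corollary \ref{Corollary:corner-border} to a border cell of $\D$, and a border cell is easily seen to be the bottommost cell of its column — the remaining cells then follow by the same induction used below. Granting the reversal, $\mathrm{North}(c,\mathcal T)$, being the cells of $\mathcal T_i$ weakly above $c$, is carried onto the cells of that column weakly below $c''$, i.e.\ onto $V_{c''}$.

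The $\mathrm{East}$ half is the real content, and I would prove it by induction on the number $l$ of cells, peeling off the cell $c^\star$ of $\mathbb T$ with the largest label. Since $\mathbb T$ is natural, $c^\star$ is the top of the leftmost nonempty tower; removing it by the flight algorithm is an ordinary corner deletion yielding the natural tableau $\mathbb T'$ of $\mathcal T'=c^\star\nwarrow\mathcal T$, say of the permutation $\omega'$, and, arguing as in the proof of Theorem \ref{Thm:Rothification} (which uses the shape identity $\mathrm{shape}(R_T)=\D_\omega$ established above together with Proposition 6.1 in \cite{CT}), the Rothification of $\mathbb T'$ is the canonical labeling of $\D_{\omega'}$, where $\D_{\omega'}$ is obtained from $\D$ by the border-cell deletion of \cite[Lemma 4.6]{FGRS}: delete $b^\star=\Phi(c^\star)$ and switch the two adjacent rows. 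I would then argue in cases for $c\neq c^\star$: (i) if $c$ does not lie below $c^\star$ in the leftmost tower, then $c^\star$ is not on $H_c$ and $b^\star$ is not on $H_{c''}$ — here one uses that the leftmost nonempty tower can never be an interior step of an $\mathrm{East}$-walk nor a tower skipped by one — so the two hooks are unchanged by the deletion (once the row switch is undone on that hook) and the inductive hypothesis applies verbatim; (ii) if $c$ lies below $c^\star$, then $\mathrm{East}(c,\mathcal T)$ is unaffected while $\mathrm{North}(c,\mathcal T)$ and $W_{c''}$ each acquire one further cell, $c^\star$ and $b^\star$ respectively, in matching fashion; (iii) for $c=c^\star$ one checks that $\Phi$ sends the staircase $\mathrm{East}(c^\star,\mathcal T)$ onto the row-arm $W_{b^\star}$ of $H_{b^\star}$, again by the same induction. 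It is convenient to organize (i) and (iii) through the complete tower tableau $\Upsilon=(\mathbb T,\mathbb T^-)$: the cell of $\D$ attached to a cell $u$ of $\mathbb T$ lies in the row determined by the cell of $\mathbb T^-$ paired with $u$ (labels summing to $l+1$), so the $\mathrm{East}$ statement becomes the statement that the cells of $\mathbb T^-$ paired with $c,c_1,c_2,\dots$ occupy consecutive positions in one strip of $\mathbb T^-$, a fact provable by the same inductive peeling and read off in $\mathbb T^-$ exactly as the $\mathrm{North}$ statement was read off in $\mathbb T$.

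I expect the main obstacle to be the compatibility in step (i): the row switch accompanying the border-cell deletion on the Rothe side has no counterpart as a tower switch on the tower side — we remove the last-slid cell, not the first-slid one — so one must show this switch leaves untouched every hook that avoids $b^\star$, and this is exactly where the fact that an $\mathrm{East}$-walk never reaches the leftmost nonempty tower (except trivially, as its first cell) is indispensable. The second delicate point is making precise the dual identification, i.e.\ showing that the condition that no intervening tower has a cell on the lower anti-diagonal dualizes, under the pairing in $\Upsilon$, to plain vertical stacking in $\mathbb T^-$; I would settle both by the same peeling, exploiting the flight/sliding duality between $\mathbb T$ and $\mathbb T^-$ to see anti-diagonals of $\mathbb T$ as strips of $\mathbb T^-$.
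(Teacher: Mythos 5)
Your strategy is genuinely different from the paper's (which analyzes the slides of the natural word $\eta$ and its reverse $\eta^r$ directly, using the no-zigzag property from \cite[Lemma 7.1]{CT} to show that adjacency from above in $\mathbb T$ becomes adjacency from the right in the reverse tableau and vice versa, and then reads off the hook of $\mathbb D$ from the shadow construction), but as written it has two genuine gaps. The decisive one is your case (iii). Your induction peels off the maximal cell $c^\star$ and its border cell $b^\star$, so the induction hypothesis applies only to cells that survive into $c^\star\nwarrow\mathbb T$; it says nothing about the hook of $c^\star$ itself. Showing that $\mathrm{East}(c^\star,\mathcal T)$ — an anti-diagonal staircase running through several towers, with the ``no intervening tower meets the lower diagonal'' condition — is carried onto the horizontal arm of $H_{b^\star}$ is precisely the substantive content of the proposition: it is the assertion that this staircase is paired, in the complete tableau, with a single horizontal strip of $\mathbb T^-$. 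You defer this to ``the flight/sliding duality'' and ``the same inductive peeling,'' but that duality is the thing to be proved; the paper proves it by explicitly tracking the slides of $s_d$ and $s_{d+1}$ inside $\eta_i$ and $\eta_j$ and inside $\eta_i^r$ and $\eta_j^r$, which is an irreplaceable computation in this argument. Relatedly, your case (i) needs that hooks avoiding $b^\star$ are untouched by the border-cell deletion, but the accompanying switch of rows $j$ and $j+1$ can reorder two cells of the same column (row $j+1$ of $\D_\omega$ sits inside row $j$ minus the border cell, so a column may meet both rows), so neither the row-arms nor the vertical order within columns is obviously preserved; this needs an argument, not just the observation about the leftmost tower.

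The second gap is in the North leg. The order-reversal you need is true, but your justification starts from the premise that the top cell of each tower is a corner, which is false: for example, in the tower diagram of $s_2s_1$, of shape $(1,1)$, the flight of the cell $(2,0)$ meets the north-east corner of the top cell of the first tower, so $(2,0)$ has no flight path and is not a corner, and Corollary \ref{Corollary:corner-border} does not apply to it. Deferring the remaining cells to ``the same induction'' runs into the same row-switch issue as above. (There is also a small slip in case (ii): when $c$ lies below $c^\star$ in the leftmost tower, the cell acquired on the Rothe side is $b^\star\in V_{c''}$, the vertical leg, not $W_{c''}$.) In short, the leg-by-leg decomposition and the peeling induction are a plausible skeleton, but the two points you yourself flag as ``delicate'' are exactly where the proof of the proposition lives, and they are not supplied.
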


\begin{proof}
Recall that the Rothification of any two standard tower tableaux $T$ and $T'$ of shape $\mathcal T$ has the
same shape, say $\mathcal D$. Thus to prove our claim, it is sufficient to consider the natural tableau
$\mathbb{T}$ of shape $\mathcal T$. Recall also that the natural word of $\omega$ is the reading word of
$\mathbb{T}$ and it has the form
$$
\eta=\eta_n\ldots\eta_{2} \eta_1
$$
where, for each $1\leq k \leq n$, we have  $\eta_k=s_ks_{k+1}\ldots s_{k+|\mathcal T_k|-1}$ if the corresponding
tower $\mathcal T_k$ is nonempty and  otherwise $\eta_k$ is the empty word.

Let $a$ and $b$ be the cells in the hook $H_c$ of the tower diagram $\mathcal T$ which are adjacent to $c$
as illustrated in the following diagrams. Observe that, by the construction  of the hooks in a tower diagram,  it is
enough to show that under the Rothification, the corresponding cells $a''$ and $b''$ lie in the hook $H_{c''}$ of
$\mathcal D$.

\begin{center}
\begin{picture}(100,120)
\put(0,40){\tableau{{}\\{}\\{}}}
\put(10,80){\tableau{ {\bullet}\\
{b}\\ {c}\\ {}\\ {}\\ {}\\ {}}}
\put(20,90){\tableau{{}\\{}\\{}\\{a}\\{}\\{}\\{}\\{}}}
\put(40,30){\tableau{{}\\{}}}
\put(60,40){\tableau{{}\\{\bullet}\\{}}}
\put(70,70){\tableau{{}\\{}\\{}\\{}\\{\bullet}\\{}}}
\put(90,20){\tableau{{\bullet}}}
\multiput(30,70)(0,0){1}{\line(1,-1){30}}
\multiput(80,40)(0,0){1}{\line(1,-1){10}}
\end{picture}
\hskip.3in
\begin{picture}(100,120)
\put(0,40){\tableau{{}\\{}\\{}}}
\put(10,80){\tableau{ {}\\
{}\\ {}\\ {}\\ {}\\ {}\\ {}}}
\put(20,90){\tableau{{\bullet}\\{\bullet}\\{b}\\{c}\\{}\\{}\\{}\\{}}}
\put(40,30){\tableau{{}\\{}}} \put(60,40){\tableau{{}\\{a}\\{}}}
\put(70,70){\tableau{{}\\{}\\{}\\{}\\{\bullet}\\{}}}
\put(90,20){\tableau{{\bullet}}}
\multiput(30,70)(0,0){1}{\line(1,-1){30}}
\multiput(80,40)(0,0){1}{\line(1,-1){10}}
\end{picture}
\end{center}

Now suppose that  $c$ lies on the diagonal $x+y=d$ and it  belongs to the tower $\mathcal T_i$, for some $i<n$.
Then both $a$ and $b$ lie on the diagonal $x+y=d+1$, the cell $b$ belongs to $\mathcal T_i$
and $a$ belongs to $\mathcal T_j$ for some $i<j$, subject to the condition that no tower between
$\mathcal{T}_i$ and $\mathcal{T}_j$ has a cell lying on the diagonal $x+y=d$.

Therefore in the natural word $\eta=\eta_n\ldots\eta_{j}\ldots \eta_{i}\ldots \eta_1$, we have
$$
\begin{aligned}
\eta_j&=s_j\ldots s_{d+1}\ldots s_{(j+|\mathcal T_j|-1)}\\
\eta_i&=s_i\ldots s_ds_{d+1}\ldots s_{(i+|\mathcal T_i|-1)}
\end{aligned}
$$
and, for any $i<t<j$, the maximum index that can appear in  $\eta_t$ is always less than or equal to $d-1$.
Moreover the cell $a$ is filled as a result of  the sliding of $s_{d+1}$ in $\eta_j$, whereas $c$ and $b$ are filled
as a result of the sliding $s_{d}$ and $s_{d+1}$ in $\eta_i$, respectively.

Further, let  $T'$ be the tower tableau obtained by sliding the reverse word $$\eta^r=\eta_1^r\ldots \eta_{i}^r\ldots  \eta_j^r
\ldots \eta_n^r$$ of $\eta$, where
$$
\begin{aligned}\eta_i^r&=s_{(i+|\mathcal T_i|-1)} \ldots s_{d+1}s_{d}\ldots
s_i\\
\eta_j^r&=s_{(j+|\mathcal T_j|-1)}\ldots s_{d+1}\ldots s_j.
\end{aligned}
$$

Let $b'$, $c'$ and  $a'$ be the cells in $T'$ that appear as a result of sliding, respectively, $s_{d+1}$ of $\eta_i^r
$,  $s_{d}$ of $\eta_i^r$ and  $s_{d+1}$  of $\eta_j^r$. We claim  that $b'$ is adjacent to $c'$ from the right
whereas $a'$ is  adjacent to $c'$ from above.

In order to prove the claim, first  recall  from the proof of
Lemma~7.1 in \cite{CT} that zigzag slides never occur in the sliding
of the natural word $\eta$ and its reverse $\eta^r$. In other words,
any number $t$ in these words is placed through a diagonal slide
which produces a cell lying on  the diagonal $x+y=t$.

Therefore in the sliding of $\eta^r$, the cells $b'$ and $c'$ appear as a result of sliding $s_{d+1}$ and $s_{d}$ of
$\eta_i^r$ and hence they lie  on the diagonals $x+y=d+1$ and $x+y=d$ respectively.  Since $b'$ appears first
and there are no zigzag slides,  $c'$  must be  to the left of $b'$. On the other hand, at the stage that $c'$
appears no  tower between $c'$ and $b'$ has a cell on the diagonal $x+y=d$ since otherwise $b'$ would be
placed on the top of that tower at the first place. Moreover sliding of the rest of the numbers do not create any new
cell between the towers of $c'$ and $b'$ since this necessitates zigzag slides. Therefore $b'$ is adjacent to $c'$
on the right in $T'$.

Now the cell $a'$ appears as a result of sliding of $s_{d+1}$  in $\eta_j^r$. Observe that any number between
$s_{d+1}$  of $\eta_i^r$ and $s_{d+1}$  of $\eta_j^r$ are either greater then $d+1$ or less than $d$. Since  there
are no zigzag slides, their diagonal sliding produces cells either on top or to the right of $b'$ or to the left
of $c'$. Hence   the  cell on top of $c'$, which lies on the diagonal $x+y=d+1$ remains to be  empty until the
sliding of these numbers.  Now    the sliding of  $s_{d+1}$ of $\eta_j^r$ follows the same path as the sliding of
$s_{d+1}$ of $\eta_i^r$ except that it fills the cell lying on top of $c'$, as required.

Hence we have proved that a cell $x$ in $\mathbb T$ is adjacent to $c$ from above (resp. on the right) if and only
if the corresponding cell $x'$ in $\mathbb T'$ is adjacent to the corresponding cell $c'$ on the right (resp. from
above). In other words, the hook $H_c$ of the diagram $\mathbb T$ is transferred to the hook $H_{c'}$ of the
diagram $\mathbb T'$, and vice versa.

Now, observe that the virtual tableau $\mathbb{T}^-$ of $\mathbb{T}$ is obtained by reflecting all the cells of $T'$
 along the diagonal $x+y=0$. We will denote  the resulting cells in $\mathbb{T}^-$ by the same letter as they
 appear in $T'$. By the construction of the Rothification, the cell $b''\in \mathcal D$ is obtained
 by intersecting the vertical shadow of $b \in \mathbb{T}$ and the horizontal shadow of $b'\in \mathbb{T}^-$. One
 also get $a''$ and $c''$ similarly.

Finally, we are ready to prove the proposition. First,  $b$ is adjacent to $c$ in $\mathbb T$ from above if and
only if $b'$ is adjacent to $c'$ in $\mathbb T'$ on the right if and only if $b'$ lies  in a lower row than the row of $c'$ in $\mathbb{T}^-$. But these are all equivalent to say that the cell $b''$ in $\mathbb D$ corresponding to $b$
is in the lower leg of the hook $H_{c''}$.

For the last case, $a''$ lies in the horizontal leg of the hook $H_{c''}$ of $\mathcal D$ if and only if $a'$ and $c'$
lie in the same row of  $\mathbb{T}^-$.
But this is equivalent to say that $a'$ is adjacent to $c'$ in $\mathbb T'$ on the right and hence, by the first part of
the proof, this is equivalent to say that $a$ is adjacent to $c$ on the right, as required.

\end{proof}

\section{Stanley and Schubert labellings}\label{Section:Schubert}
In this section, we characterize the type of labeling of a tower diagram that describes Schubert polynomials and
Stanley symmetric functions. We first introduce the following definition.
\begin{defn}
Let $\omega$ be a permutation and let $\mathcal T$ be a diagram associated to $\omega$.
\begin{enumerate}
\item Let $T$ be a labeling of $\mathcal T$ with labels $( 1^{a_1},  2^{a_2},\ldots, k^{a_k})$.
The \emph{reading monomial} of $T$ is the monomial
$$ x^T = x_1^{a_1}x_2^{a_2}\ldots x_k^{a_k}.$$
\item A (column strict) labeling $T$ of $\mathcal T$ is called a \emph{Stanley labeling} of $\mathcal T$ if the
reading monomial of $T$ is a monomial in the Stanley symmetric function $F_\omega$.
\item A (column strict) labeling $T$ of $\mathcal T$ is called a \emph{Schubert labeling} of $\mathcal T$ if the
reading monomial of $T$ is a monomial in the Schubert polynomial of $\omega$.
\end{enumerate}
\end{defn}
By \cite{FGRS}, any flagged balanced column strict labeling of the Rothe
diagram of $\omega$ is a Schubert labeling and by \cite{FGRS}, any balanced column strict labeling of the Rothe
diagram of $\omega$ is a Stanley labeling. Clearly, one obtains the set of Schubert labelings as a 
set of Stanley labelings by putting the flag conditions. We introduce the following definition to check flag
conditions.
\begin{defn}
Let $\omega$ be a permutation and let $\mathcal T$ be a diagram associated to $\omega$. Let $T$ and
$T^\prime$ be two labelings of $\mathcal T$. We write $T\le T^\prime$, and say that $T$ is less than or equal to
$T^\prime$, if for each cell $c\in \mathcal T$, the label of $c$ at $T$ is less than or equal to that in $T^\prime$.
\end{defn}

Next we characterize Stanley labelings of tower diagrams.
\begin{thm}
Let $\omega$ be a permutationand let $\mathcal T$ be its tower diagram. A labeling $T$ of $\mathcal T$ is
Stanley if and only if it is column strict and semi-standard.
\end{thm}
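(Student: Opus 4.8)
The plan is to reduce this statement to Theorem~\ref{thm:semistandard-balanced} together with the results of Fomin--Greene--Reiner--Shimozono on balanced labelings. By Theorem~\ref{thm:semistandard-balanced}, pushing labels up to $\mathcal T$ and rearranging them non-decreasingly on columns gives a bijection between balanced labelings $\BL(\D)$ of the Rothe diagram and semi-standard tower tableaux $\mathrm{SSTT}(\mathcal T)$, with inverse given by Rothification. The key point to check is that, under this bijection, the \emph{column strict} balanced labelings correspond exactly to the \emph{column strict semi-standard} tower tableaux, and moreover that the reading monomial is preserved. Once this is established, the theorem follows: by \cite{FGRS}, a labeling is a Stanley labeling of $\D$ iff it is column strict and balanced; transporting across the bijection, a labeling $T$ of $\mathcal T$ has reading monomial appearing in $F_\omega$ iff the corresponding Rothe labeling $R_T$ does, iff $R_T$ is column strict and balanced, iff $T$ is column strict and semi-standard.

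First I would verify that Rothification preserves the reading monomial. This is essentially immediate from the construction of $R_T$: the cells of $R_T$ carry exactly the labels (first coordinates) of the cells of $T$, each label occurring the same number of times, since Rothification only relocates cells and reads off their $T$-labels. Hence $x^{R_T} = x^T$, and a monomial lies in $F_\omega$ when expressed via $R_T$ iff it does via $T$. Next I would check that column strictness is matched. Here I use Proposition~\ref{Pro:hooks}: the hook $H_{c}$ of $\mathcal T$ with vertex $c$ is carried by Rothification to the hook $H_{c''}$ of $\D$, and more precisely the \emph{vertical leg} $\mathrm{North}(c,\mathcal T)$ of the tower hook corresponds to the \emph{lower leg} (the column part) of the Rothe hook $H_{c''}$. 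Thus the cells forming a column segment in the tower diagram are precisely the cells forming a column segment in the Rothe diagram (up to the rearrangement, which only permutes entries within a column and hence does not affect whether entries within that column repeat). Therefore "no repeats within any column of $\mathcal T$" translates exactly to "no repeats within any column of $\D$", i.e.\ $T$ is column strict iff $R_T$ is column strict.

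With these two observations in hand, the proof is a short chain of equivalences: $T$ is Stanley $\iff x^T$ appears in $F_\omega$ $\iff x^{R_T}$ appears in $F_\omega$ $\iff R_T$ is a column strict balanced labeling of $\D$ (by \cite[Theorem 4.3]{FGRS}) $\iff R_T$ is balanced and column strict $\iff T$ is semi-standard (by Theorem~\ref{thm:semistandard-balanced}) and column strict (by the hook argument). I expect the main obstacle to be the column-strictness correspondence, specifically pinning down that the vertical leg of the tower hook really maps to the column leg of the Rothe hook under Rothification, rather than getting mixed with the row leg; but this is exactly what the detailed analysis in the proof of Proposition~\ref{Pro:hooks} supplies—the reflection of $T'$ along $x+y=0$ to form the virtual tableau $\mathbb T^-$ sends "adjacent from above in $\mathcal T$" to "in a lower row in $\mathbb T^-$", which is precisely membership in the lower (column) leg of $H_{c''}$. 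The rearrangement step is harmless since it is a within-column permutation, so I would note that explicitly to close the argument.
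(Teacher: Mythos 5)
Your proposal is correct and follows essentially the same route as the paper: both reduce the statement to the Fomin--Greene--Reiner--Shimozono characterization of Stanley labelings as column strict balanced labelings and transport it across the bijection of Theorem~\ref{thm:semistandard-balanced}. The only difference is that you spell out the preservation of the reading monomial and justify the column-strictness correspondence via Proposition~\ref{Pro:hooks}, whereas the paper simply declares the latter clear; your added detail is harmless and arguably an improvement.
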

\begin{proof}
By Theorem 4.3 in \cite{FGRS}, a column strict balanced labeling of the Rothe diagram of $\omega$ is a Stanley
labeling. Now by Thereom \ref{thm:semistandard-balanced}, a tower tableau is semi-standard if and only if
its Rothification is balanced. It is also clear that a tower tableau is column-strict if and only if its Rothification is.
Therefore, the result follows.
\end{proof}

The following corollary is now immediate.
\begin{cor}
For a permutation $\omega$, we have
$$
F_\omega = \sum_T x^T
$$
where the sum is over all column strict semi-standard tower tableaux $T$ of shape $\omega$.
\end{cor}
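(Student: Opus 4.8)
The plan is to deduce this directly from the preceding theorem, the correspondence of Theorem~\ref{thm:semistandard-balanced}, and the known formula of Fomin--Greene--Reiner--Shimozono. First I would recall that, by the unflagged version of their formula (\cite[Theorem 4.3]{FGRS}), one has $F_\omega = \sum_D x^D$, where the sum runs over all column strict balanced labelings $D$ of the Rothe diagram $\D_\omega$, counted with multiplicity. So it suffices to exhibit a multiplicity-preserving bijection between column strict semi-standard tower tableaux of shape $\mathcal T$ and column strict balanced labelings of $\D_\omega$ under which reading monomials are preserved.

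Such a bijection is precisely the Rothification of Theorem~\ref{thm:semistandard-balanced}, restricted to the column strict objects. By that theorem, Rothification is a bijection between $\mathrm{SSTT}(\mathcal T)$ and $\BL(\D_\omega)$; and, as noted just before the theorem, a tower tableau is column strict if and only if its Rothification is, so the bijection restricts to a bijection between column strict semi-standard tower tableaux and column strict balanced labelings. The one remaining point is that this bijection preserves the reading monomial. This is immediate from the construction: the Rothification (and its inverse, pushing labels up to the tower diagram) only transports cells within a fixed column and rearranges the entries inside each column, so the multiset of labels of $T$ equals the multiset of labels of $R_T$. Hence if $T$ has labels $( 1^{a_1}, \ldots, k^{a_k})$ then so does $R_T$, and $x^T = x_1^{a_1}\cdots x_k^{a_k} = x^{R_T}$.

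Putting these together, summing over all column strict semi-standard tower tableaux $T$ of shape $\omega$ gives $\sum_T x^T = \sum_T x^{R_T} = \sum_D x^D = F_\omega$, where the middle equality is the change of summation index along the bijection above and the last equality is the Fomin--Greene--Reiner--Shimozono description of $F_\omega$. There is no serious obstacle here once Theorem~\ref{thm:semistandard-balanced} is available; the only points requiring care are to invoke the unflagged FGRS formula (rather than the flagged one used for Schubert polynomials) and to make the elementary observation explicit that Rothification does not change the multiset of labels, so that it preserves reading monomials and in particular respects multiplicities.
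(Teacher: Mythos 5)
Your argument is correct and follows essentially the same route as the paper: the paper derives this corollary as an immediate consequence of the preceding theorem (a labeling is Stanley iff column strict and semi-standard), whose proof rests on exactly the two ingredients you use, namely the unflagged FGRS formula and the bijection of Theorem~\ref{thm:semistandard-balanced}. Your added remark that Rothification preserves the multiset of labels (hence reading monomials and multiplicities) is the point the paper leaves implicit, and it is verified exactly as you say.
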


To characterize Schubert labelings, we first introduce the flag
conditions. Let $\mathcal T$ be a tower diagram. The \textbf{flag
labeling} of $\mathcal T$ is the function $f:\mathcal T\rightarrow
\mathbb N$ such that $f(i,j) = k$ if the cell corresponding to the
cell $(i,j)$ in the Rothification is in the $k$-th row. We call the
corresponding tower tableau the \textbf{flag tableau $\mathbb F$ of
shape $\mathcal T$}. Note that one can construct the flag tableau
without referring to the Rothe diagram. The construction is as
follows.

Let $\mathcal T$ be a tower diagram. Then the flag tableau $\mathbb
F$ is obtained from $\mathcal T$ by labeling the cells in such a way
that at any step, first, among the unlabeled cells, the bottom cell
$c$ in the left most tower is labelled by its flight number. Then
any cell  in East$(c,\mathcal T)$ is labelled by the same number and
the step is over.  We give an example of this labeling which also
illustrates the steps.

\begin{center}
\begin{picture}(80,60)
\multiput(0,0)(0,0){1}{\line(1,0){80}}
\multiput(0,0)(0,0){1}{\line(0,1){60}}
\multiput(0,10)(2,0){40}{\line(0,1){.1}}
\multiput(0,20)(2,0){40}{\line(0,1){.1}}
\multiput(0,30)(2,0){40}{\line(0,1){.1}}
\multiput(0,40)(2,0){40}{\line(0,1){.1}}
\multiput(0,50)(2,0){40}{\line(0,1){.1}}
\multiput(0,60)(2,0){40}{\line(0,1){.1}}
\multiput(10,0)(0,2){30}{\line(1,0){.1}}
\multiput(20,0)(0,2){30}{\line(1,0){.1}}
\multiput(30,0)(0,2){30}{\line(1,0){.1}}
\multiput(40,0)(0,2){30}{\line(1,0){.1}}
\multiput(50,0)(0,2){30}{\line(1,0){.1}}
\multiput(60,0)(0,2){30}{\line(1,0){.1}}
\multiput(70,0)(0,2){30}{\line(1,0){.1}}
\multiput(80,0)(0,2){30}{\line(1,0){.1}}
 \put(10,0){\tableau{{2}}}
\put(20,30){\tableau{{}\\{}\\{}\\{2}}}
\put(30,10){\tableau{{}\\{2}}}
\put(40,0){\tableau{{2}}}
\put(60,20){\tableau{{}\\{}\\{}}}
\end{picture}\hskip.1in
\begin{picture}(80,60)
\multiput(0,0)(0,0){1}{\line(1,0){80}}
\multiput(0,0)(0,0){1}{\line(0,1){60}}
\multiput(0,10)(2,0){40}{\line(0,1){.1}}
\multiput(0,20)(2,0){40}{\line(0,1){.1}}
\multiput(0,30)(2,0){40}{\line(0,1){.1}}
\multiput(0,40)(2,0){40}{\line(0,1){.1}}
\multiput(0,50)(2,0){40}{\line(0,1){.1}}
\multiput(0,60)(2,0){40}{\line(0,1){.1}}
\multiput(10,0)(0,2){30}{\line(1,0){.1}}
\multiput(20,0)(0,2){30}{\line(1,0){.1}}
\multiput(30,0)(0,2){30}{\line(1,0){.1}}
\multiput(40,0)(0,2){30}{\line(1,0){.1}}
\multiput(50,0)(0,2){30}{\line(1,0){.1}}
\multiput(60,0)(0,2){30}{\line(1,0){.1}}
\multiput(70,0)(0,2){30}{\line(1,0){.1}}
\multiput(80,0)(0,2){30}{\line(1,0){.1}}
 \put(10,0){\tableau{2{}}}
\put(20,30){\tableau{{}\\{}\\{4}\\{2}}}
\put(30,10){\tableau{{4}\\{2}}}
\put(40,0){\tableau{{2}}}
\put(60,20){\tableau{{}\\{}\\{}}}
\end{picture}\hskip.1in
\begin{picture}(80,60)
\multiput(0,0)(0,0){1}{\line(1,0){80}}
\multiput(0,0)(0,0){1}{\line(0,1){60}}
\multiput(0,10)(2,0){40}{\line(0,1){.1}}
\multiput(0,20)(2,0){40}{\line(0,1){.1}}
\multiput(0,30)(2,0){40}{\line(0,1){.1}}
\multiput(0,40)(2,0){40}{\line(0,1){.1}}
\multiput(0,50)(2,0){40}{\line(0,1){.1}}
\multiput(0,60)(2,0){40}{\line(0,1){.1}}
\multiput(10,0)(0,2){30}{\line(1,0){.1}}
\multiput(20,0)(0,2){30}{\line(1,0){.1}}
\multiput(30,0)(0,2){30}{\line(1,0){.1}}
\multiput(40,0)(0,2){30}{\line(1,0){.1}}
\multiput(50,0)(0,2){30}{\line(1,0){.1}}
\multiput(60,0)(0,2){30}{\line(1,0){.1}}
\multiput(70,0)(0,2){30}{\line(1,0){.1}}
\multiput(80,0)(0,2){30}{\line(1,0){.1}}
 \put(10,0){\tableau{2{}}}
\put(20,30){\tableau{{}\\{5}\\{4}\\{2}}}
\put(30,10){\tableau{{4}\\{2}}}
\put(40,0){\tableau{{2}}}
\put(60,20){\tableau{{}\\{}\\{}}}
\end{picture}\hskip.1in
\begin{picture}(80,60)
\multiput(0,0)(0,0){1}{\line(1,0){80}}
\multiput(0,0)(0,0){1}{\line(0,1){60}}
\multiput(0,10)(2,0){40}{\line(0,1){.1}}
\multiput(0,20)(2,0){40}{\line(0,1){.1}}
\multiput(0,30)(2,0){40}{\line(0,1){.1}}
\multiput(0,40)(2,0){40}{\line(0,1){.1}}
\multiput(0,50)(2,0){40}{\line(0,1){.1}}
\multiput(0,60)(2,0){40}{\line(0,1){.1}}
\multiput(10,0)(0,2){30}{\line(1,0){.1}}
\multiput(20,0)(0,2){30}{\line(1,0){.1}}
\multiput(30,0)(0,2){30}{\line(1,0){.1}}
\multiput(40,0)(0,2){30}{\line(1,0){.1}}
\multiput(50,0)(0,2){30}{\line(1,0){.1}}
\multiput(60,0)(0,2){30}{\line(1,0){.1}}
\multiput(70,0)(0,2){30}{\line(1,0){.1}}
\multiput(80,0)(0,2){30}{\line(1,0){.1}}
 \put(10,0){\tableau{2{}}}
\put(20,30){\tableau{{6}\\{5}\\{4}\\{2}}}
\put(30,10){\tableau{{4}\\{2}}}
\put(40,0){\tableau{{2}}}
\put(60,30){\tableau{\\{9}\\{8}\\{6}}}
\end{picture}
\end{center}
To have a comparison, we include the flagged labeled Rothe diagram
of the corresponding permutation.
\begin{center}
\begin{picture}(120,70)
\put(0,0){\line(0,-1){90}}
\put(0,0){\line(-1,0){40}}
\put(0,0){\line(1,0){90}}
\put(0,0){\line(0,1){50}}
\put(10,0){\tableau{{}}}
\put(20,30){\tableau{{}\\{}\\{}\\{}}}
\put(30,10){\tableau{{}\\{}}}
\put(40,0){\tableau{{}}}
\put(60,20){\tableau{{}\\{}\\{}}}
\put(-10,-10){\tableau{\\{}}}
\put(-20,-10){\tableau{\\{}}}
\put(-30,-10){\tableau{\\{}}}
\put(-40,-10){\tableau{\\{}}}
\put(-10,-30){\tableau{\\{}}}
\put(-20,-30){\tableau{\\{}}}
\put(-10,-40){\tableau{\\{}}}
\put(-10,-50){\tableau{\\{}}}
\put(-20,-50){\tableau{\\{}}}
\put(-10,-70){\tableau{\\{}}}
\put(-10,-80){\tableau{\\{}}}
\put(10,-10){\tableau{\\{2}}}
\put(20,-10){\tableau{\\{2}}}
\put(30,-10){\tableau{\\{2}}}
\put(40,-10){\tableau{\\{2}}}
\put(20,-30){\tableau{\\{4}\\{5}\\{6}}}
\put(30,-30){\tableau{\\{4}}}
\put(60,-50){\tableau{\\{6}}}
\put(60,-70){\tableau{\\{8}\\{9}}}
\end{picture}
\end{center}
\vspace{1.5in}

Note that, by Proposition \ref{Pro:hooks}, the construction is equivalent to label the horizontal leg of a hook by the
row index of the corresponding hook in the Rothe diagram.  Finally, the following result is the characterization of
Schubert labeling of tower diagrams.
\begin{thm}
Let $\omega$ be a permutation and $\mathcal T$ be its tower diagram.
A labeling $T$ of $\mathcal T$ is Schubert if and only if it is
column strict, semi-standard and $T\le \mathbb F$.
\end{thm}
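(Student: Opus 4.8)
The plan is to transport the flagged version of the Fomin--Greene--Reiner--Shimozono description through the Rothification bijection. First recall from \cite{FGRS} that the monomials occurring in the Schubert polynomial $\mathfrak S_\omega$ are precisely the reading monomials of the flagged column-strict balanced labelings of $\D_\omega$; this is the flag refinement of the result already used to characterize Stanley labelings. Since the Rothification sends a tower tableau $T$ to a labeling $R_T$ having the same multiset of labels, it preserves reading monomials, $x^T = x^{R_T}$. Hence a labeling $T$ of $\mathcal T$ is a Schubert labeling if and only if $R_T$ is a flagged column-strict balanced labeling of $\D_\omega$.

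By Theorem \ref{thm:semistandard-balanced}, together with the observation (already used in the characterization of Stanley labelings) that Rothification preserves column-strictness, $R_T$ is a column-strict balanced labeling exactly when $T$ is column strict and semi-standard. So the proof comes down to showing that, under this correspondence, $R_T$ is flagged if and only if $T\le\mathbb F$. For this I would use that the flag tableau $\mathbb F$ of $\mathcal T$ is, essentially by its definition and by Proposition \ref{Pro:hooks}, the tableau whose entry at a cell $c$ is the row index in $\D_\omega$ of the cell matched with $c$ under the Rothification (equivalently, by Theorem \ref{thm:pushUp}, under the push-up map), and that this matching of positions depends only on the shape $\mathcal T$: all Rothifications of shape-$\mathcal T$ tableaux have the common shape $\D_\omega$, and the matching is read off column by column from the hook correspondence of Proposition \ref{Pro:hooks}. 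Granting this, and since the Rothification carries the $T$-label of $c$ to the $R_T$-label of its matched cell, the FGRS flag condition ``every cell of $\D_\omega$ in the $i$-th row carries a label $\le i$'' becomes, literally cell by cell, ``the $T$-label of every $c\in\mathcal T$ is at most its $\mathbb F$-label,'' i.e. $T\le\mathbb F$. Combining with the first paragraph then gives: $T$ is Schubert if and only if it is column strict, semi-standard and $T\le\mathbb F$.

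The main obstacle is the claim in the previous paragraph that $\mathbb F$ genuinely encodes the Rothe-row indices under a well-defined, labeling-independent cell correspondence, so that the single fixed tableau $\mathbb F$ really does capture the flag condition for every admissible $R_T$ and not merely for the canonical ones. This is exactly what Proposition \ref{Pro:hooks} and the hook-based recipe for $\mathbb F$ are set up to supply: the East-chains of $\mathcal T$, along which $\mathbb F$ is constant equal to a flight number, are precisely the preimages under the inverse Rothification of the rows of $\D_\omega$, and that flight number is the corresponding row index. The remaining work is therefore to spell out this identification --- tracing a horizontal leg $\mathrm{East}(c,\mathcal T)$ to the row of $\D_\omega$ corresponding, via Proposition \ref{Pro:hooks}, to the Rothe hook whose vertex is the image of $c$, and checking that the row index of that hook equals the flight number assigned by the flag-tableau construction --- after which the biconditional $R_T$ flagged $\iff T\le\mathbb F$ is immediate.
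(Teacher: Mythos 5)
Your proposal is correct and follows essentially the same route as the paper: invoke the flagged FGRS description (Theorem 6.2 of \cite{FGRS}), reduce the balanced column-strict part to semi-standardness and column-strictness via Theorem \ref{thm:semistandard-balanced}, and translate the flag condition into $T\le\mathbb F$ using the fact that $\mathbb F$ records the Rothe-row indices through the hook correspondence of Proposition \ref{Pro:hooks}. You are somewhat more explicit than the paper about why the cell correspondence is labeling-independent, but the argument is the same.
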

\begin{proof}
In this case, by Theorem 6.2 in \cite{FGRS}, a column strict flagged balanced labeling of the Rothe diagram of
$\omega$ is a Schubert labeling. In other words, a Schubert labeling is just a flagged Stanley labeling.
Now by the above observation, it is clear that a labeling of the Rothe diagram is flagged and Stanley if and only if
the corresponding tower diagram satisfies the conditions of the theorem, as required.
\end{proof}

Similar to the previous corollary, the following corollary is immediate.
\begin{cor}
For a permutation $\omega$, we have
$$
\mathfrak S_\omega = \sum_T x^T
$$
where the sum is over all column strict, semi-standard tower tableaux $T$ of shape $\omega$ with $T\le 
\mathbb F$.

\end{cor}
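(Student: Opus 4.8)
The plan is to isolate the flag condition and reduce everything else to the characterization of Stanley labelings just established. On the Rothe side, Theorem~6.2 of \cite{FGRS} says that a labeling of $\D_\omega$ is Schubert exactly when it is a column strict balanced labeling that is moreover flagged; equivalently, a Schubert labeling of $\D_\omega$ is a flagged Stanley labeling. On the tower side, the characterization of Stanley labelings above together with Theorem~\ref{thm:semistandard-balanced} identifies column strict semi-standard tower tableaux of $\mathcal T$ with column strict balanced labelings of $\D$, the identification being the Rothification in one direction and ``push the labels up to $\mathcal T$ and sort each column'' in the other; moreover this identification preserves, column by column, the multiset of labels. So it suffices to prove that under this identification a column strict balanced labeling $D$ of $\D$ is flagged if and only if the associated column strict semi-standard tower tableau $T$ satisfies $T\le \mathbb F$.

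First I would pin down $\mathbb F$. By its definition $\mathbb F(c)$ is the row index of the cell of $\D$ that corresponds to $c$ under Rothification, and by Proposition~\ref{Pro:hooks} this correspondence depends only on $\mathcal T$ and carries the hook $H_c$ of $\mathcal T$ to the hook $H_{c''}$ of $\D$; in particular, moving up a column of $\mathcal T$ corresponds to moving down the matching column of $\D$, so the entries of $\mathbb F$ increase weakly up each column. A column strict semi-standard tower tableau has strictly increasing columns as well (immediate from the recursive definition, since at each step the removed corner is the top cell of its column and carries the global maximum label). Hence $T\le\mathbb F$ is equivalent to saying that in each column the $k$-th smallest label of $T$ is at most the $k$-th smallest row index of the corresponding column of $\D$.

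Granting this, the forward implication is a rearrangement argument: if $D$ is flagged then, column by column, $D$ gives a bijection between its labels and the row indices of that column with each label at most its image, so the sorted label sequence is dominated by the sorted row-index sequence, i.e.\ $T\le\mathbb F$. For the converse one must show that when $T\le\mathbb F$ the actual (in general non-monotone) arrangement within each column of a column strict balanced labeling is already flagged; this is where I expect the real work to be, and it is the point at which one must invoke the structure of column strict balanced labelings from \cite{FGRS}, namely that flaggedness of such a labeling, given the shape and the column multisets, is detected precisely by the sorted comparison above — so that $T\le\mathbb F$ forces $R_T=D$ to be flagged. With this the equivalence ``$D$ flagged $\iff$ $T\le\mathbb F$'' is complete.

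Assembling the pieces: $T$ is a Schubert labeling of $\mathcal T$ $\iff$ $R_T$ is a flagged column strict balanced labeling of $\D$ $\iff$ $R_T$ is a column strict balanced labeling and $T\le\mathbb F$ $\iff$ ($T$ is column strict and semi-standard) and $T\le\mathbb F$, the third equivalence being Theorem~\ref{thm:semistandard-balanced} (which also underlies the characterization of Stanley labelings above). The main obstacle, as indicated, is reconciling the cell-by-cell flag condition on $\D$ with the cellwise inequality $T\le\mathbb F$ on $\mathcal T$ across the column-reordering built into the Rothification bijection; once the monotonicity of the columns of $\mathbb F$ and the cited property of column strict balanced labelings are in place, the rest is a direct transcription of \cite{FGRS} through the dictionary of Section~\ref{Section:BalancedLabel}.
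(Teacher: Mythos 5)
Your route is the paper's route: the corollary is read off from the preceding theorem characterizing Schubert labelings, which is in turn Theorem 6.2 of \cite{FGRS} transported through the dictionary of Theorem \ref{thm:semistandard-balanced}, with the flag tableau $\mathbb F$ absorbing the flag condition. Your description of $\mathbb F$ (constant on East-chains, weakly increasing up each column, equal columnwise to the sorted row indices of the matching column of $\D$) and your forward implication (flagged $\Rightarrow$ sorted domination $\Rightarrow T\le\mathbb F$) are correct, and they are essentially what the paper extracts from Proposition \ref{Pro:hooks}.

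The gap is the converse, and you have located it exactly but not closed it. The Rothification of a general semi-standard $T$ places labels according to the standardization $S(T)$, so within a fixed column of $\D$ the labels of $R_T$ occupy the available rows in an order that in general differs from the sorted order implicit in the definition of $\mathbb F$ (the paper's own example with reading word $314354$ already shows labels $3,5,1$ read down a column). Consequently $T\le\mathbb F$, which compares the $k$-th smallest label with the $k$-th smallest row index, does not formally imply the cellwise flag condition on $R_T$: a large label could be assigned to a cell with small row index. You defer this to ``the structure of column strict balanced labelings from \cite{FGRS}, namely that flaggedness \ldots is detected precisely by the sorted comparison,'' but no such statement is proved in \cite{FGRS} or in your argument, so the surjectivity of the correspondence onto flagged labelings — hence the claimed equality of generating functions — is left unestablished. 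To be fair, the paper is equally terse at this point (it declares the equivalence ``clear'' from the observation following Proposition \ref{Pro:hooks}), so your proposal reproduces the paper's argument together with its one genuine elision; a complete proof must show that for a column strict balanced labeling the within-column placement is constrained enough that sorted domination forces the cellwise flag condition.
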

\appendix
\section{Proof of Lemma \ref{Lemma:key}}
\newtheorem{defnn}{Definition}
\newtheorem{lemm}{Lemma}

The proof makes use of the formal definition of the sliding algorithm which we include below.

\begin{defnn}\label{def:sliding} Let $\mathcal{T}=(\mathcal{T}_1,\mathcal{T}_2,\ldots)$
be a tower diagram and $\alpha$ be a positive integer. In the
following we denote   the \textbf{\textit{sliding of}} $\alpha$ into
$\mathcal{T}$ by
$$\alpha^{\searrow}
\mathcal{T}=\alpha^{\searrow}(\mathcal{T}_1,\mathcal{T}_2,\ldots).
$$
\begin{enumerate}
\item[\textbf{(S1)}] If  $\mathcal{T}$ has  no squares lying on the diagonal
$x+y=\alpha-1$ then we put
$$\alpha^{\searrow} \mathcal{T}:= (\mathcal{T}_1,\ldots,\mathcal{T}_{\alpha-1}) \sqcup \alpha^{\searrow}
(\mathcal{T}_{\alpha},\ldots)$$
\begin{enumerate}
\item (Direct slide) If  $\mathcal{T}$ has  no squares lying on the diagonal
$x+y=\alpha$ then necessarily  $\mathcal{T}_\alpha =\varnothing$ and
for $\mathcal{T}_\alpha'= \{ (\alpha,0)\}$
$$\alpha^{\searrow}
(\mathcal{T}_{\alpha},\ldots)=(\mathcal{T}_{\alpha}',\ldots)~~\text{and}~~
\alpha^{\searrow} \mathcal{T}:=(\mathcal{T}_1\ldots
\mathcal{T}_{\alpha-1},\mathcal{T}'_\alpha,\mathcal{T}_{\alpha+1},\ldots
).
$$
\item If $(\alpha,0) \in \mathcal{T}_\alpha$ and $(\alpha,1)\not \in
\mathcal{T}_\alpha$ then the slide  $\alpha^{\searrow} \mathcal{T}$
terminates without a result.
\item (Zigzag slide) If $(\alpha,0) \in \mathcal{T}_\alpha$ and $(\alpha,1) \in
\mathcal{T}_\alpha$  then
$$\alpha^{\searrow} \mathcal{T}:= (\mathcal{T}_1,\ldots,\mathcal{T}_\alpha) \sqcup
(\alpha+1)^{\searrow} (\mathcal{T}_{\alpha+1},\ldots).
$$
and $\alpha^{\searrow} \mathcal{T}$ terminates if and only if
$(\alpha+1)^{\searrow} (\mathcal{T}_{\alpha+1},\ldots)$ terminates.
\end{enumerate}
\item[\textbf{(S2)}]  Suppose now that  $\mathcal{T}$ has  some squares lying on the diagonal
$x+y=\alpha-1$ and  let $\mathcal{T}_i$ be the first tower  from the
left which contains such a  square, which is necessarily
$(i,\alpha-1-i)$ for some $1\leq i < \alpha$. Then we put
$$\alpha^{\searrow} \mathcal{T}:=
(\mathcal{T}_1,\ldots,\mathcal{T}_{i-1}) \sqcup \alpha^{\searrow}
(\mathcal{T}_{i},\ldots).$$
\begin{enumerate}
\item (Direct slide) If $(i,\alpha-i)\not \in \mathcal{T}_i$ then for $\mathcal{T}_i'=\mathcal{T}_i \cup \{
(i,\alpha-i)\}$,
$$\alpha^{\searrow} (\mathcal{T}_{i},\ldots):=(\mathcal{T}_i',\ldots)~~\text{and}~~
\alpha^{\searrow} \mathcal{T}:=(\mathcal{T}_1\ldots
\mathcal{T}_{i-1},\mathcal{T}'_i,\mathcal{T}_{i+1},\ldots ).
$$
\item If $(i,\alpha-i) \in \mathcal{T}_i$ and $(i,\alpha-i+1)\not \in
\mathcal{T}_i$ then the slide $\alpha^{\searrow} \mathcal{T}$
terminates without a result.
\item (Zigzag slide) If $(i,\alpha-i) \in \mathcal{T}_i$ and $(i,\alpha-i+1) \in
\mathcal{T}_i$  then
$$\alpha^{\searrow} \mathcal{T}:= (\mathcal{T}_1,\ldots,\mathcal{T}_i) \sqcup
(\alpha+1)^{\searrow} (\mathcal{T}_{i+1},\ldots)
$$
and $\alpha^{\searrow} \mathcal{T}$ terminates if and only if
$(\alpha+1)^{\searrow} (\mathcal{T}_{i+1},\ldots)$ terminates.
\end{enumerate}
\end{enumerate}

Therefore if the algorithm does not terminate then
$\alpha^{\searrow} \mathcal{T}:= \mathcal{T}\cup \{ (i,j)\}$ for
some square $(i,j)$.
\end{defnn}

The following easy result  is crucial in the proof of the lemma.

\begin{lemm} \it{Let $T=(T_1,T_2,\ldots)$ be a standard tower tableau  and
let $c$ be the cell labeled by  $1$, which is  contained in the
tower $T_i$ for some $i\geq 1$, equivalently $c=(i,0)$. Then
$|T_i|>|T_{i+1}|$.}
\end{lemm}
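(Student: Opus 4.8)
The plan is to realize $T$ as the result of sliding its reading word $\alpha=\alpha_1\alpha_2\ldots\alpha_n$ into the empty diagram and to monitor the two heights $|T_i|$ and $|T_{i+1}|$ throughout the process. The cell labeled $1$ is the first cell produced, so it is placed by a direct slide of $\alpha_1$ into the empty diagram; hence $c=(\alpha_1,0)$ and $i=\alpha_1$, and right after this first slide tower $T_i$ has height $1$ while tower $T_{i+1}$ is empty, so $|T_i|>|T_{i+1}|$. I would then prove, by induction on the number of remaining slides, that the strict inequality $|T_i|>|T_{i+1}|$ is preserved by every slide. A single slide inserts exactly one cell, and if that cell falls outside $T_i$ and $T_{i+1}$, or falls in $T_i$, the inequality is clearly preserved; so the whole point is the case in which the slide enlarges $T_{i+1}$.

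The heart of the matter is the claim: \emph{if a slide deposits a cell in $T_{i+1}$, then at that instant one already has $|T_i|\ge|T_{i+1}|+2$}, so that afterwards $|T_i|\ge|T_{i+1}|+1$ still holds and the induction goes through. I would prove this by tracing the recursion in the sliding rules (S1), (S2) of Definition~\ref{def:sliding}. Suppose the new cell lands at height $h=|T_{i+1}|$; then it lies on the diagonal $x+y=(i+1)+h$, and, when $h\ge 1$, the cell $(i+1,h-1)$ beneath it lies on $x+y=i+h$. First, such a deposit can only occur while the recursion is working on the tail $(\mathcal T_{i+1},\mathcal T_{i+2},\ldots)$: if $\mathcal T_i$ were still in the current sublist, the inductive hypothesis $|T_i|\ge h+1$ would force $\mathcal T_i$ to carry a cell on $x+y=i+h$, which is incompatible with $\mathcal T_{i+1}$ being the leftmost tower of the sublist on that diagonal (and a direct slide onto an empty $\mathcal T_{i+1}$ is ruled out in the same way because $c=(i,0)$ lies on $x+y=i$). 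Second, the recursion can pass from a sublist still containing $\mathcal T_i$ into the tail $(\mathcal T_{i+1},\ldots)$ only by a zigzag slide out of $\mathcal T_i$: the three sublist-shrinking moves are the (S1)-skip to a tower $\mathcal T_\alpha$, the (S2)-skip to the leftmost tower meeting $x+y=\alpha-1$, and a zigzag from the active tower $\mathcal T_t$ to $(\mathcal T_{t+1},\ldots)$; the first reaches $\mathcal T_{i+1}$ only if $\alpha=i+1$, which would need $x+y=i$ empty --- impossible; the second reaches $\mathcal T_{i+1}$ while $\mathcal T_i$ is present only if $|T_i|\le|T_{i+1}|$, against the invariant; and the third reaches $\mathcal T_{i+1}$ exactly when $t=i$. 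Third, a zigzag out of $\mathcal T_i$ (case (S1)(c) or (S2)(c) with $\mathcal T_i$ the active tower) forces $(i,v-i)$ and $(i,v-i+1)$ to lie in $\mathcal T_i$, where $v$ is the value being slid at $\mathcal T_i$; writing $j=v-i$ this gives $|T_i|\ge j+2$, and since after the zigzag the value is $v+1$ the cell then deposited in $\mathcal T_{i+1}$ lies on $x+y=v+1$, i.e. at height $j$, which is exactly the old $|T_{i+1}|=h$. Hence $h=j$ and $|T_i|\ge h+2$, as claimed. (The borderline $h=0$, i.e. inserting $(i+1,0)$ into an empty $\mathcal T_{i+1}$, comes only from (S1)(c), which forces $(i,0),(i,1)\in\mathcal T_i$, so $|T_i|\ge 2$.)

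The step I expect to be the real obstacle is the middle one: it takes disciplined bookkeeping of the recursion to verify, one subcase of (S1) and (S2) at a time, that the inductive invariant $|T_i|>|T_{i+1}|$ excludes every route into or enlargement of $T_{i+1}$ except a zigzag out of $\mathcal T_i$. Once that is in place, the height count in the zigzag case is essentially immediate, the induction closes, and $|T_i|>|T_{i+1}|$ holds for $T$ itself, as required.
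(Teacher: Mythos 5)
Your argument is correct, and it reaches the conclusion by a genuinely different route than the paper. Both proofs induct on the length of the reading word, but the key step differs. The paper peels off the last letter: the induction hypothesis applied to $T_{\le n-1}$ gives $|T_i|\ge |T_{i+1}|$ after the last slide, and the equality case is then killed \emph{a posteriori} by the corner-cell characterization of standardness --- if $|T_i|=|T_{i+1}|$, the newly added top cell of $T_{i+1}$ has its flight line immediately blocked by the north-east corner of the top cell of $T_i$, so it has no flight path and cannot be the corner cell labeled $n$, a contradiction. You instead never invoke flight paths: you maintain the strict inequality as an invariant through the sliding recursion itself, and your case analysis of (S1)/(S2) is sound --- a direct deposit into $\mathcal T_{i+1}$ from a sublist still containing $\mathcal T_i$ forces $|T_i|\le|T_{i+1}|$ against the invariant (or needs the diagonal $x+y=i$ to be empty, which $(i,0)$ prevents), so the only entry into $T_{i+1}$ is through a zigzag out of $\mathcal T_i$ at some value $v$, which requires $(i,v-i),(i,v-i+1)\in T_i$ and deposits at height $v-i$, yielding the quantitative bound $|T_i|\ge|T_{i+1}|+2$ at that instant. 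What the paper's route buys is brevity: one appeal to the already-established equivalence between standardness and the corner condition replaces your bookkeeping of the recursion. What your route buys is self-containedness at the level of Definition~\ref{def:sliding} (it uses only that $T$ arises by sliding a word, not the flight-path reformulation) plus the slightly stronger intermediate fact that the gap between the two towers is at least $2$ whenever $T_{i+1}$ is about to grow. Both are complete proofs; yours is the more laborious but also the more mechanical one.
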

\begin{proof} Let $\alpha_1\alpha_2\ldots\alpha_{n}$ be the reading word of $T$.
 Observe that sliding of $\alpha_1$ produces the cell
$c=(\alpha_1,0)$ i.e., $i=\alpha_1$.
 We will prove this argument by induction on the size of the
tower tableaux. If $n=1$ then $T$ has only one cell and there is
nothing to prove. For $n=2$ we may have that  $\alpha_2<\alpha_1$,
$\alpha_2=\alpha_1+1$ or $\alpha_2>\alpha_1+1$, but  in each cases
the sliding of $\alpha_2$ after $\alpha_1$ does not produce the cell
$(i+1,0)$ therefore the statement is also true  in this case.

 Now
assume that the statement is true for all tableaux of size  $< n$.
Let $T$ be the tableau of the reading word
$\alpha_1\alpha_2\ldots\alpha_{n}$.  Recall that  $T$ is obtained by
sliding $\alpha_n$ in to the tableaux of the reading word
$\alpha_1\alpha_2\ldots\alpha_{n-1}$, namely $T_{\leq n-1}$, and
hence the size of $T$ is just one greater than that of $T_{\leq
n-1}$. On the other hand  the $i$-th tower of $T_{\leq n-1}$
contains more cells than its $(i+1)$-th tower by induction hypothesis.
Therefore   $|T_i|\geq |T_{i+1}|$ in $T$. If $|T_i|=|T_{i+1}|$ then
this shows that the cell, say $c$,  produced by the sliding of
$\alpha_n$ into $T_{\leq n-1}$ is the top cell of $T_{i+1}$, and it
is a corner cell labeled by $n$ as the following figure illustrates.
\begin{center}
\begin{picture}(80,60)
 \multiput(0,0)(0,0){1}{\line(1,0){80}}
\multiput(0,0)(0,0){1}{\line(0,1){60}}
\multiput(0,10)(2,0){40}{\line(0,1){.1}}
\multiput(0,20)(2,0){40}{\line(0,1){.1}}
\multiput(0,30)(2,0){40}{\line(0,1){.1}}
\multiput(0,40)(2,0){40}{\line(0,1){.1}}
\multiput(0,50)(2,0){40}{\line(0,1){.1}}
\multiput(0,60)(2,0){40}{\line(0,1){.1}}
\multiput(10,0)(0,2){30}{\line(1,0){.1}}
\multiput(20,0)(0,2){30}{\line(1,0){.1}}
\multiput(30,0)(0,2){30}{\line(1,0){.1}}
\multiput(40,0)(0,2){30}{\line(1,0){.1}}
\multiput(50,0)(0,2){30}{\line(1,0){.1}}
\multiput(60,0)(0,2){30}{\line(1,0){.1}}
\multiput(70,0)(0,2){30}{\line(1,0){.1}}
\put(40,30){\tableau{{}\\{}\\{}\\{1}}}
\put(50,30){\tableau{{n}\\{}\\{}\\{}}}
\end{picture}
\end{center}
As it can be easily observed from the above picture,  no matter how
the towers other than $T_i$ and $T_{i+1}$ are
 settled in $T$, the cell $c$ labeled $n$ cannot have a flight path in
 $T$, which contradicts it being a corner cell. Therefore $|T_i|> |T_{i+1}|$ in
 $T$.

\end{proof}

Now, we prove Lemma \ref{Lemma:key}.
\begin{proof} We prove this result using induction on the size $n$ of the tableaux, and hence on the
corresponding reading words.
For $n=1$, the argument is clearly true. For  $n=2$,  we have either
$|T_i|=1$ or $|T_i|=2$ but always $|T_{i+1}|=0$ and easy analysis on
the corresponding reading words gives the desired result.

Let $T$ and $T'$ be  the tableaux with reading words
$\alpha_1\alpha_2\ldots\alpha_{n}$ and $\alpha_2\ldots\alpha_{n-1}$
respectively. We have that
$$T=\alpha_n \searrow T_{\leq n-1} ~~\text{and}~~T'=\alpha_n \searrow T'_{\leq n-1}
$$
where the reading words of $T_{\leq n-1}$ and $T'_{\leq n-1}$ are,
respectively,  $\alpha_1\alpha_2\ldots\alpha_{n-1}$ and
$\alpha_2\ldots\alpha_{n-1}$. Observe that the cells labeled by $1$
in $T_{\leq n-1}$ and in $T$ are the same.  Therefore by induction
hypothesis, we may assume that $T'_{\leq n-1}$ is obtained from
$T_{\leq n-1}$ by switching its $i$-th and $(i+1)$-st towers in the
way described by the algorithm.  Observe further that the remaining
towers of $T'_{\leq n-1}$ and $T_{\leq n-1}$ are exactly the same.

Let  $c$ and $d$ be the cells produced by  $\alpha_n \searrow
T'_{\leq n-1}$ and  $\alpha_n \searrow T_{\leq n-1}$ respectively.

\noindent{\bf Case 1.} We first assume that $c$  appears before the
$i$-th tower of $T_{\leq n-1}$. Then $c=d$, since the part of
$T_{\leq n-1}$ and $T'_{\leq n-1}$ from the first tower to the
$(i-1)$-st are the same.  On the other hand the label of $c$ is $n-1$
whereas the label of $d$ is $n$. Hence $T'$ is obtained from $T$ by
the above algorithm.

\noindent{\bf Case 2.} Now we suppose that  $c$   does not appear
before the $i$-th tower of $T_{\leq n-1}$.  Recall that
$$\alpha_n \searrow T_{\leq n-1}=(T_1,\ldots,T_{i-1})\sqcup
(\alpha_n+r) \searrow (T_i,T_{i+1},\ldots)
$$
where $r\leq i-1$ represent the number of times that the sliding of
$\alpha_n$  makes a zigzag through $(T_1,\ldots,T_{i-1})$.  Let
$$a=\alpha_n+r.$$

\noindent{\bf Case 2.1.} We assume that, in order to produce $T,$ the sliding $a\searrow (T_i,T_{i+1},\ldots)$
produces a cell on the tower $T_i$ of $T_{\leq n-1}$ as illustrated below. Recall that the size of the tower $T_i$ is
strictly greater than $T_{i+1}$ in $T_{\leq n-1}$ and that  $T'_{\leq n-1}$ is obtained from $T_{\leq n-1}$ by just
interchanging its $i$-st and $(i+1)$-th towers in a specific manner, which guarantees that the size of the $i$-th tower
is less than or equal to that of the $(i+1)$st tower in  $T'_{\leq n-1}$. As the following figure illustrates, this shows
that sliding of  $a$ to $T'_{\leq n-1}$ produced a cell on top of the $i$-th tower. Therefore $T'$ is obtained from
$T$ as suggested by the Lemma.

\begin{center}
\begin{picture}(60,60)
\put(-20,30){T=}
 \multiput(0,0)(0,0){1}{\line(1,0){60}}
\multiput(0,0)(0,0){1}{\line(0,1){60}}
\multiput(0,10)(2,0){30}{\line(0,1){.1}}
\multiput(0,20)(2,0){30}{\line(0,1){.1}}
\multiput(0,30)(2,0){30}{\line(0,1){.1}}
\multiput(0,40)(2,0){30}{\line(0,1){.1}}
\multiput(0,50)(2,0){30}{\line(0,1){.1}}
\multiput(0,60)(2,0){30}{\line(0,1){.1}}
\multiput(10,0)(0,2){30}{\line(1,0){.1}}
\multiput(20,0)(0,2){30}{\line(1,0){.1}}
\multiput(30,0)(0,2){30}{\line(1,0){.1}}
\multiput(40,0)(0,2){30}{\line(1,0){.1}}
\multiput(50,0)(0,2){30}{\line(1,0){.1}} \put(5,55){$a$}
\put(5,55){\vector(1,-1){18}} \put(22,32){$*$}
\put(20,20){\tableau{{*}\\{*}\\{*}}}
\put(30,10){\tableau{{\bullet}\\{\bullet}}}
\end{picture}\hskip.3in
\begin{picture}(60,60)
\put(-20,30){S=}
 \multiput(0,0)(0,0){1}{\line(1,0){60}}
\multiput(0,0)(0,0){1}{\line(0,1){60}}
\multiput(0,10)(2,0){30}{\line(0,1){.1}}
\multiput(0,20)(2,0){30}{\line(0,1){.1}}
\multiput(0,30)(2,0){30}{\line(0,1){.1}}
\multiput(0,40)(2,0){30}{\line(0,1){.1}}
\multiput(0,50)(2,0){30}{\line(0,1){.1}}
\multiput(0,60)(2,0){30}{\line(0,1){.1}}
\multiput(10,0)(0,2){30}{\line(1,0){.1}}
\multiput(20,0)(0,2){30}{\line(1,0){.1}}
\multiput(30,0)(0,2){30}{\line(1,0){.1}}
\multiput(40,0)(0,2){30}{\line(1,0){.1}}
\multiput(50,0)(0,2){30}{\line(1,0){.1}}
\put(5,55){\vector(1,-1){28}} \put(5,55){$a$} \put(32,22){$*$}
\put(30,10){\tableau{{*}\\{*}}}
\put(20,10){\tableau{{\bullet}\\{\bullet}}}
\end{picture}\hskip.2in
\end{center}

 \noindent{\bf Case 2.2.} We assume that, in order to produce $T,$ the sliding $a \searrow (T_i,T_{i+1},\ldots)$
 produces a cell on the tower $T_{i+1}$ of $T_{\leq n-1}$. Since the size of $T_i$ is greater than that of $T_{i+1}$,
 the sliding of $a$ makes a zigzag on the tower  $T_i$ as illustrated below. On the other hand it just produce a
 cell on top of the i-th tower of $T'_{\leq n-1}$. Now it is clear that $T'$ is obtain from $T$ as the lemma suggests.

\begin{center}
\begin{picture}(60,60)
\put(-20,30){T=}
 \multiput(0,0)(0,0){1}{\line(1,0){60}}
\multiput(0,0)(0,0){1}{\line(0,1){60}}
\multiput(0,10)(2,0){30}{\line(0,1){.1}}
\multiput(0,20)(2,0){30}{\line(0,1){.1}}
\multiput(0,30)(2,0){30}{\line(0,1){.1}}
\multiput(0,40)(2,0){30}{\line(0,1){.1}}
\multiput(0,50)(2,0){30}{\line(0,1){.1}}
\multiput(0,60)(2,0){30}{\line(0,1){.1}}
\multiput(10,0)(0,2){30}{\line(1,0){.1}}
\multiput(20,0)(0,2){30}{\line(1,0){.1}}
\multiput(30,0)(0,2){30}{\line(1,0){.1}}
\multiput(40,0)(0,2){30}{\line(1,0){.1}}
\multiput(50,0)(0,2){30}{\line(1,0){.1}}
\put(20,40){\tableau{{*}\\{*}\\{*}\\{*}\\{*}}}
\put(30,10){\tableau{{\bullet}\\{\bullet}}} \put(5,45){$a$}
\put(5,45){\line(1,-1){20}} \put(25,25){\line(0,1){10}}
\put(25,35){\vector(1,-1){10}} \put(32,22){$\bullet$}
\end{picture}\hskip.3in
\begin{picture}(60,60)
\put(-20,30){S=}
 \multiput(0,0)(0,0){1}{\line(1,0){60}}
\multiput(0,0)(0,0){1}{\line(0,1){60}}
\multiput(0,10)(2,0){30}{\line(0,1){.1}}
\multiput(0,20)(2,0){30}{\line(0,1){.1}}
\multiput(0,30)(2,0){30}{\line(0,1){.1}}
\multiput(0,40)(2,0){30}{\line(0,1){.1}}
\multiput(0,50)(2,0){30}{\line(0,1){.1}}
\multiput(0,60)(2,0){30}{\line(0,1){.1}}
\multiput(10,0)(0,2){30}{\line(1,0){.1}}
\multiput(20,0)(0,2){30}{\line(1,0){.1}}
\multiput(30,0)(0,2){30}{\line(1,0){.1}}
\multiput(40,0)(0,2){30}{\line(1,0){.1}}
\multiput(50,0)(0,2){30}{\line(1,0){.1}}
\put(30,30){\tableau{{*}\\{*}\\{*}\\{*}}}
\put(20,10){\tableau{{\bullet}\\{\bullet}}}
\put(5,45){\vector(1,-1){20}} \put(5,45){$a$} \put(22,22){$\bullet$}
\end{picture}\hskip.2in
\end{center}

\noindent{\bf Case 2.3.} We assume that  $a \searrow
(T_i,T_{i+1},\ldots)$ produces a cell on a tower which is on the
right of $T_{i+1}$.

\noindent{\bf Case 2.3.1.} First assume that the sliding  $a \searrow (T_i,T_{i+1},\ldots)$ makes no zigzag on
$T_{i}$. Let $(i,j)$ be the top cell of $T_i$ of $T_{\leq n-1}$. Then $a\geq (i+j)+2$. Now in $T'_{\leq n-1}$ the
$(i+1)$-st tower is longer than $i$-th tower and since the top cell of this longer tower  is $(i+1,j-1)$,  we  see
that the sliding of $a$ to $T'_{\leq n-1}$ does not  go through the $i$-th and $(i+1)$-st towers. Since the rest of the
towers of both $T_{\leq n-1}$ and  $T'_{\leq n-1}$ are the same, the sliding of $a$ ends up in the
same way for both tableaux. Therefore $T'$ is obtained from  $T$ in the way that the lemma suggests.

 \begin{center}
\begin{picture}(60,60)
\put(-20,30){T=}
 \multiput(0,0)(0,0){1}{\line(1,0){60}}
\multiput(0,0)(0,0){1}{\line(0,1){60}}
\multiput(0,10)(2,0){30}{\line(0,1){.1}}
\multiput(0,20)(2,0){30}{\line(0,1){.1}}
\multiput(0,30)(2,0){30}{\line(0,1){.1}}
\multiput(0,40)(2,0){30}{\line(0,1){.1}}
\multiput(0,50)(2,0){30}{\line(0,1){.1}}
\multiput(0,60)(2,0){30}{\line(0,1){.1}}
\multiput(10,0)(0,2){30}{\line(1,0){.1}}
\multiput(20,0)(0,2){30}{\line(1,0){.1}}
\multiput(30,0)(0,2){30}{\line(1,0){.1}}
\multiput(40,0)(0,2){30}{\line(1,0){.1}}
\multiput(50,0)(0,2){30}{\line(1,0){.1}}
\put(20,20){\tableau{{*}\\{*}\\{*}}}
\put(30,10){\tableau{{\bullet}\\{\bullet}}} \put(5,55){$a$}
\put(5,65){\vector(1,-1){40}}
\end{picture}\hskip.3in
\begin{picture}(60,60)
\put(-20,30){S=}
 \multiput(0,0)(0,0){1}{\line(1,0){60}}
\multiput(0,0)(0,0){1}{\line(0,1){60}}
\multiput(0,10)(2,0){30}{\line(0,1){.1}}
\multiput(0,20)(2,0){30}{\line(0,1){.1}}
\multiput(0,30)(2,0){30}{\line(0,1){.1}}
\multiput(0,40)(2,0){30}{\line(0,1){.1}}
\multiput(0,50)(2,0){30}{\line(0,1){.1}}
\multiput(0,60)(2,0){30}{\line(0,1){.1}}
\multiput(10,0)(0,2){30}{\line(1,0){.1}}
\multiput(20,0)(0,2){30}{\line(1,0){.1}}
\multiput(30,0)(0,2){30}{\line(1,0){.1}}
\multiput(40,0)(0,2){30}{\line(1,0){.1}}
\multiput(50,0)(0,2){30}{\line(1,0){.1}}
\put(30,10){\tableau{{*}\\{*}}}
\put(20,10){\tableau{{\bullet}\\{\bullet}}} \put(5,55){$a$}
\put(5,65){\vector(1,-1){40}}
\end{picture}\hskip.2in
\end{center}

 \noindent{\bf Case
2.3.2.} We assume that the sliding  $a \searrow (T_i,T_{i+1},\ldots)$ makes a zigzag on $T_{i}$.  Then the sliding
of $a$ makes a zigzag at the tower $T_{i+1}$ or not as the following figures illustrates.
\begin{center}
\begin{picture}(60,60)
\put(-20,30){T=}
 \multiput(0,0)(0,0){1}{\line(1,0){60}}
\multiput(0,0)(0,0){1}{\line(0,1){60}}
\multiput(0,10)(2,0){30}{\line(0,1){.1}}
\multiput(0,20)(2,0){30}{\line(0,1){.1}}
\multiput(0,30)(2,0){30}{\line(0,1){.1}}
\multiput(0,40)(2,0){30}{\line(0,1){.1}}
\multiput(0,50)(2,0){30}{\line(0,1){.1}}
\multiput(0,60)(2,0){30}{\line(0,1){.1}}
\multiput(10,0)(0,2){30}{\line(1,0){.1}}
\multiput(20,0)(0,2){30}{\line(1,0){.1}}
\multiput(30,0)(0,2){30}{\line(1,0){.1}}
\multiput(40,0)(0,2){30}{\line(1,0){.1}}
\multiput(50,0)(0,2){30}{\line(1,0){.1}}
\put(20,40){\tableau{{*}\\{*}\\{*}\\{*}\\{*}}}
\put(30,0){\tableau{{\bullet}}} \put(5,45){$a$}
\put(5,45){\line(1,-1){20}} \put(25,25){\line(0,1){10}}
\put(25,35){\vector(1,-1){20}}
\end{picture}\hskip.3in
\begin{picture}(60,60)
\put(-20,30){S=}
 \multiput(0,0)(0,0){1}{\line(1,0){60}}
\multiput(0,0)(0,0){1}{\line(0,1){60}}
\multiput(0,10)(2,0){30}{\line(0,1){.1}}
\multiput(0,20)(2,0){30}{\line(0,1){.1}}
\multiput(0,30)(2,0){30}{\line(0,1){.1}}
\multiput(0,40)(2,0){30}{\line(0,1){.1}}
\multiput(0,50)(2,0){30}{\line(0,1){.1}}
\multiput(0,60)(2,0){30}{\line(0,1){.1}}
\multiput(10,0)(0,2){30}{\line(1,0){.1}}
\multiput(20,0)(0,2){30}{\line(1,0){.1}}
\multiput(30,0)(0,2){30}{\line(1,0){.1}}
\multiput(40,0)(0,2){30}{\line(1,0){.1}}
\multiput(50,0)(0,2){30}{\line(1,0){.1}}
\put(30,30){\tableau{{*}\\{*}\\{*}\\{*}}}
\put(20,0){\tableau{{\bullet}}} \put(5,45){$a$}
\put(5,45){\line(1,-1){30}} \put(35,15){\line(0,1){10}}
\put(35,25){\vector(1,-1){10}}
\end{picture}\hskip.2in
\end{center}

\begin{center}
\begin{picture}(60,60)
\put(-20,30){T=}
 \multiput(0,0)(0,0){1}{\line(1,0){60}}
\multiput(0,0)(0,0){1}{\line(0,1){60}}
\multiput(0,10)(2,0){30}{\line(0,1){.1}}
\multiput(0,20)(2,0){30}{\line(0,1){.1}}
\multiput(0,30)(2,0){30}{\line(0,1){.1}}
\multiput(0,40)(2,0){30}{\line(0,1){.1}}
\multiput(0,50)(2,0){30}{\line(0,1){.1}}
\multiput(0,60)(2,0){30}{\line(0,1){.1}}
\multiput(10,0)(0,2){30}{\line(1,0){.1}}
\multiput(20,0)(0,2){30}{\line(1,0){.1}}
\multiput(30,0)(0,2){30}{\line(1,0){.1}}
\multiput(40,0)(0,2){30}{\line(1,0){.1}}
\multiput(50,0)(0,2){30}{\line(1,0){.1}}
\put(20,40){\tableau{{*}\\{*}\\{*}\\{*}\\{*}}}
\put(30,20){\tableau{{\bullet}\\{\bullet}\\{\bullet}}}
\put(5,35){$a$} \put(5,35){\line(1,-1){20}}
\put(25,15){\line(0,1){10}} \put(25,25){\line(1,-1){10}}
\put(35,15){\line(0,1){10}} \put(35,25){\vector(1,-1){10}}
\end{picture}\hskip.3in
\begin{picture}(60,60)
\put(-20,30){S=}
 \multiput(0,0)(0,0){1}{\line(1,0){60}}
\multiput(0,0)(0,0){1}{\line(0,1){60}}
\multiput(0,10)(2,0){30}{\line(0,1){.1}}
\multiput(0,20)(2,0){30}{\line(0,1){.1}}
\multiput(0,30)(2,0){30}{\line(0,1){.1}}
\multiput(0,40)(2,0){30}{\line(0,1){.1}}
\multiput(0,50)(2,0){30}{\line(0,1){.1}}
\multiput(0,60)(2,0){30}{\line(0,1){.1}}
\multiput(10,0)(0,2){30}{\line(1,0){.1}}
\multiput(20,0)(0,2){30}{\line(1,0){.1}}
\multiput(30,0)(0,2){30}{\line(1,0){.1}}
\multiput(40,0)(0,2){30}{\line(1,0){.1}}
\multiput(50,0)(0,2){30}{\line(1,0){.1}}
\put(30,30){\tableau{{*}\\{*}\\{*}\\{*}}}
\put(20,20){\tableau{{\bullet}\\{\bullet}\\{\bullet}}}
\put(5,35){$a$} \put(5,35){\line(1,-1){20}}
\put(25,15){\line(0,1){10}} \put(25,25){\line(1,-1){10}}
\put(35,15){\line(0,1){10}} \put(35,25){\vector(1,-1){10}}
\end{picture}\hskip.2in
\end{center}

It is easy to observe  that the sliding of $a$ into both tableaux
ends at the same cell since the lengths of the towers in the rest of the tableaux are the same in
both tableaux. Therefore, the result is proved.
\end{proof}

\end{document}